\makeatletter\@addtoreset{equation}{section}\makeatother
\newtheorem{thm}{Theorem}
\newtheorem{lem}{Lemma}[section]
\newtheorem{hyp}{Hypothesis}
\theoremstyle{definition}
\newtheorem{rmk}{Remark}[section]
\def\Re{\mathop\mathrm{Re}\nolimits}
\newcommand{\revised}[1]{\textcolor{black}{#1}}
\begin{document}

\title{Localized patterns in planar bistable \revised{weakly coupled} lattice systems}

\author{
Jason J. Bramburger and Bj\"orn Sandstede\\
Division of Applied Mathematics\\
Brown University\\
Providence, RI 02912, USA
}

\date{\today}
\maketitle

\begin{abstract}
Localized planar patterns in spatially extended bistable systems are known to exist along intricate bifurcation diagrams, which are commonly referred to as snaking curves. Their analysis is challenging as techniques such as spatial dynamics that have been used to explain snaking in one space dimension no longer work in the planar case. Here, we consider bistable systems posed on square lattices and provide an analytical explanation of snaking near the anti-continuum limit using Lyapunov--Schmidt reduction. We also establish stability results for localized patterns, discuss bifurcations to asymmetric states, and provide further numerical evidence that the shape of snaking curves changes drastically as the coefficient that reflects the strength of the spatial coupling crosses a finite threshold.
\end{abstract}


\section{Introduction}

Spatially extended bistable systems have been shown to exhibit a wide variety of stationary spatial patterns. In this manuscript, we focus on localized patterns that resemble a stable patterned state in some compact, connected spatial region and a second spatially homogeneous stable rest-state outside of this compact region. Localized structures arise in many applications, for instance as urban crime spots \cite{Hotspot3,Hotspot,Hotspot2}, vegetation patterns \cite{Veg1,Veg2}, and soft matter quasicrystals \cite{Subramanian}, and in chemical reactions \cite{Chemical}, semiconductors \cite{Semiconductor}, and ferrofluids \cite{Ferrofluid}. We refer to the review papers \cite{Dawes,Knobloch} for additional references to applications.

\begin{figure}
\center
\includegraphics{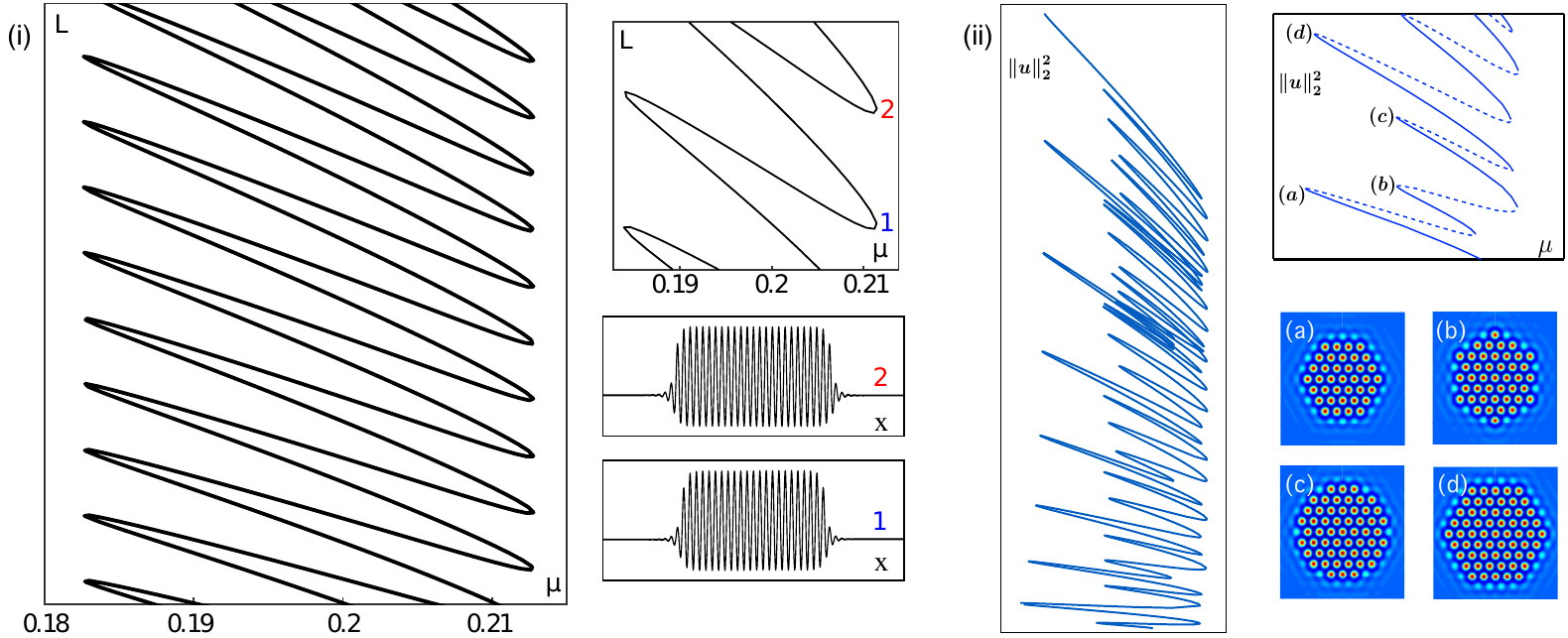}
\caption{Shown are the existence curves and sample spatial profiles of localized patterns in the Swift--Hohenberg equation in one space dimension in panel~(i) and the plane (reproduced from \cite{Lloyd}) in panel~(ii). Note that fold bifurcations are aligned with two vertical asymptotics in the 1d case, while they are aligned with many different vertical asymptotes in the planar case.}
\label{f:1}
\end{figure}

Upon varying a system parameter, localized patterns often trace out intricate existence curves, which are commonly referred to as \textit{snaking} diagrams. To illustrate these curves, we show the existence curves of one-dimensional localized roll structures and planar localized hexagon patches of the Swift--Hohenberg equation in Figure~\ref{f:1}. In both cases, the existence curves in function space are unbounded, and the spatial extent $L$ of the patterned (spatially non-homogeneous) part of the spatial profiles increases without bound along the curve. Furthermore, solutions exist only within a bounded interval in parameter space, and branches turn back at infinitely many fold bifurcations at which additional rolls or hexagon cells are added to the pattern.

The mechanisms that drive localized patterns and the associated snaking diagrams are well understood for partial differential equations (PDEs) posed on \revised{the real line or on cylindrical domains \cite{Pomeau, Woods, Coullet, Burke2, Beck, Kozyreff} and much is known also about asymmetric states \cite{Burke, Burke2}, localized states with disconnected regions of localization \cite{2Pulse}, broken symmetries \cite{Wagenknecht, Makrides, Xu}, PDE stability \cite{Makrides2}, and situations where snaking is precluded \cite{Aougab}. For lattices, results about pinning of one-dimensional and planar fronts near the continuum limit were obtained in \cite{Dean, King}, respectively, using asymptotics-beyond-all-orders methods. Results about snaking diagrams of localized patterns in one-dimensional lattices were recently established in \cite{Bramburger}.}

Despite this progress in understanding localized patterns in one spatial dimension, little is known analytically about planar patterns, whose bifurcation diagrams are more complicated and whose spatial profiles change in a more intricate way along their existence curves \cite{Chong,Lloyd2,Lloyd,McCalla,Taylor}. For instance, as shown in Figure~\ref{f:1}, localized planar hexagon patches do not grow by simply adding a complete set of hexagon cells around the entire perimeter of the current patch at each fold bifurcation. Instead, individual hexagon cells are added at each fold bifurcation, and the patterns sometimes even recede inwards from the outermost corners. The more complex changes of the profiles are reflected in the bifurcation diagrams, which are less regular and predictable compared to the one-dimensional case; see again Figure~\ref{f:1}.

To better understand the bifurcation structure of localized patterns in higher space dimensions, we will \revised{investigate} spatially extended systems posed on a planar square lattice of the form
\begin{equation}\label{LDS_Intro}
\dot{u}_{n,m} = d (\Delta u)_{n,m} + f(u_{n,m},\mu), \quad (n,m)\in\mathbb{Z}^2,
\end{equation}
where the $5$-point discrete Laplacian $\Delta$ defined by
\begin{equation}\label{e:lo}
(\Delta u)_{n,m} := u_{n+1,m} + u_{n-1,m} + u_{n,m+1} + u_{n,m-1} - 4u_{n,m}
\end{equation}
reflects the interaction across neighboring elements of the lattice $\mathbb{Z}^2$, the constant $d\geq0$ measures the strength of these interactions, and the function $f$ is a bistable nonlinearity that depends on a one-dimensional parameter $\mu$. \revised{We will concentrate on steady-state solutions of (\ref{LDS_Intro}) that are $D_4$-symmetric in the following sense. If we consider the lattice $\mathbb{Z}^2$ as consisting of those elements of the plane $\mathbb{R}^2$ that have integer coordinates, then the lattice is invariant under two representations of the group $D_4$. The first representation of $D_4$ is generated by the rotation by $90^\mathrm{o}$ around the origin and reflection across the horizontal line through the origin. The second representation is generated by rotation by $90^\mathrm{o}$ around the point $(\frac12,\frac12)$ and reflection across the horizontal line through the same point $(\frac12,\frac12)$. With these representations in mind, we will focus on on-site steady states of (\ref{LDS_Intro}), which, by definition, are invariant under the first representation of $D_4$, and off-site steady states of (\ref{LDS_Intro}), which, by definition, are invariant under the second representation of $D_4$; we refer to the center panels of Figure~\ref{f:2} for examples of off-site and on-site patterns.}

\begin{figure}
\center
\includegraphics[scale=0.82]{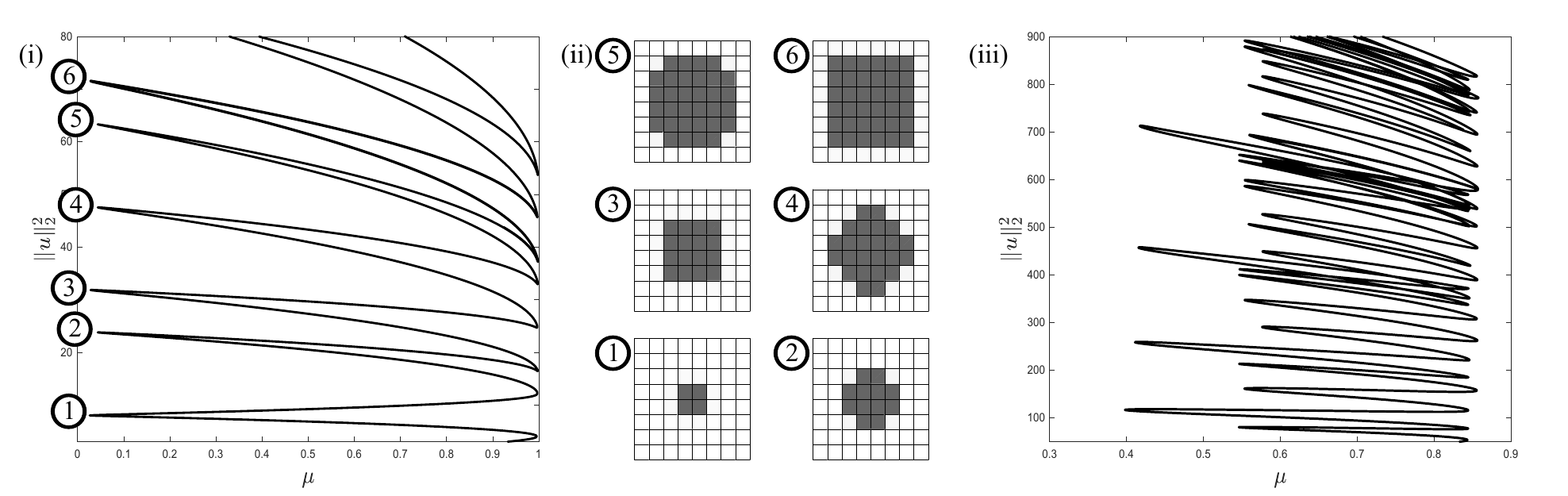}
\includegraphics[scale=0.82]{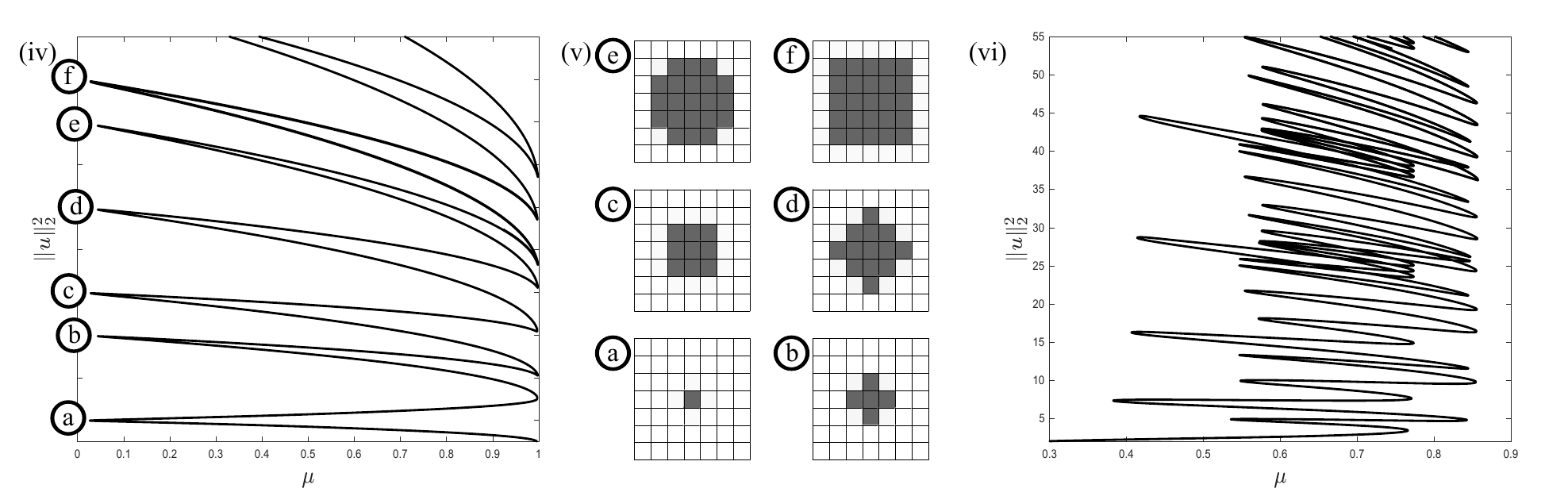}
\caption{\revised{Shown are sample profiles and snaking curves of planar localized $D_4$-symmetric off-site (top) and on-site (bottom) patterns of the lattice system (\ref{LDS_Intro}) with $f(u,\mu)=-\mu u+2u^3-u^5$ where $d=0.001$ in the left panels~(i) and~(iv), while $d=0.1$ in the right panels~(iii) and~(vi).}}
\label{f:2}
\end{figure}

Using the cubic-quintic nonlinearity $f(u,\mu)=-\mu u+2u^3-u^5$, it was demonstrated in \cite{Taylor} using numerical continuation that the bifurcation curves of $D_4$-symmetric \revised{square patches} of (\ref{LDS_Intro}) bear a striking resemblance to those of localized hexagon patches in the planar Swift--Hohenberg equation: compare Figure~\ref{f:1}(ii) showing hexagon patches of the Swift--Hohenberg equation with Figure~\ref{f:2}(iii) showing \revised{$D_4$-symmetric off-site} patterns of (\ref{LDS_Intro}). \revised{Similar numerical results were obtained recently in \cite{Susanto} for equations similar to (\ref{LDS_Intro}) posed on square, hexagonal, and triangular lattices\footnote{We were not aware of this manuscript prior to submitting this work}}. An intuitive reasoning for the similarity between the continuous and discrete case is that domain-filling hexagons are the preferred planar state in the continuous Swift--Hohenberg PDE, and we can therefore think of hexagon patches as developing on an underlying hidden hexagonal lattice. Posing the system directly on this lattice might therefore reproduce a similar bifurcation diagram.

As mentioned earlier, not much is known analytically for localized planar patterns, and one of the reasons is that the techniques used in the one-dimensional case rely primarily on formulating the existence problem as a spatial dynamical system in the unbounded \revised{spatial} variable, so that localized structures can be viewed and constructed as homoclinic orbits \cite{Avitabile,Beck,Bramburger}. This approach is no longer available for genuinely planar patterns. For lattice dynamical systems, we can exploit the fact that the anti-continuum limit of (\ref{LDS_Intro}), which corresponds to setting $d=0$, provides a regime that is accessible to analysis as the equations on individual vertices of the lattice decouple from each other. Applying Lyapunov--Schmidt reduction and blow-up techniques in the regime $0<d\ll1$, we will construct \revised{$D_4$-}symmetric localized patterns of (\ref{LDS_Intro}) and prove that their bifurcation curves resemble those of the spatially one-dimensional case as illustrated in \revised{Figure~\ref{f:2} for off-site and on-site patterns. In particular, we will see that, starting from the center of each face, new cells are added on each face as the branch passes through fold bifurcations; see Figure~\ref{f:2} for an illustration.}

The question of what causes the shift from the `regular' snaking curves for small $d$ to the `irregular' snaking curves for larger $d$ visible in Figure~\ref{f:2} was answered partially by Taylor and Dawes \cite{Taylor}, whose numerical computations revealed that increasing the coupling parameter $d$ causes a closed curve (a so-called \textit{isola}) of square patterns to collide with and attach itself to the snaking curve in a collision they termed a \textit{switchback}. We will build and expand on their investigation by providing numerical evidence that indicate that switchbacks arise near the same critical value of $d$. In summary, this paper contains the following results:
\begin{compactitem}
\item Analysis: We prove that for $0<d\ll1$ the bifurcation curves of \revised{$D_4$-}symmetric localized patterns of (\ref{LDS_Intro}) resemble the snaking curves of the spatially one-dimensional \revised{continuous and discrete} settings and determine spectral and nonlinear stability of these localized patterns.
\item Numerics: We discuss asymmetric patterns that bifurcate from the symmetric patterns and provide computations that indicate that the switchbacks, which are responsible for reorganizing the snaking curves as the coupling parameter $d$ increases, occur near a critical threshold $d_*$ of $d$. These computations also indicate that, at the switchbacks that occur further up on the snaking curve, each isola and the snaking branch undergo a more complex reorganization that involves creating new isolas in the process.
\end{compactitem}
\revised{We note that the mechanisms that drive the fold bifurcations for planar lattices near the anti-continuum limit were also found independently in \cite{Susanto}, where the leading-order interactions between neighboring sites were discussed; we will put their considerations on a rigorous footing here.}

The remainder of this paper is organized as follows. Our analytical results and numerical findings are provided in \S\ref{sec:Results}. The proofs of our analytic results in the anti-continuum limit as well as extensions of our results to a broader class of nonlinearities can be found in \S\ref{sec:Proofs}, and we conclude with a discussion in \S\ref{sec:Discussion}.
 

\section{Main results}\label{sec:Results}

\begin{figure}
\centering
\includegraphics[scale=0.95]{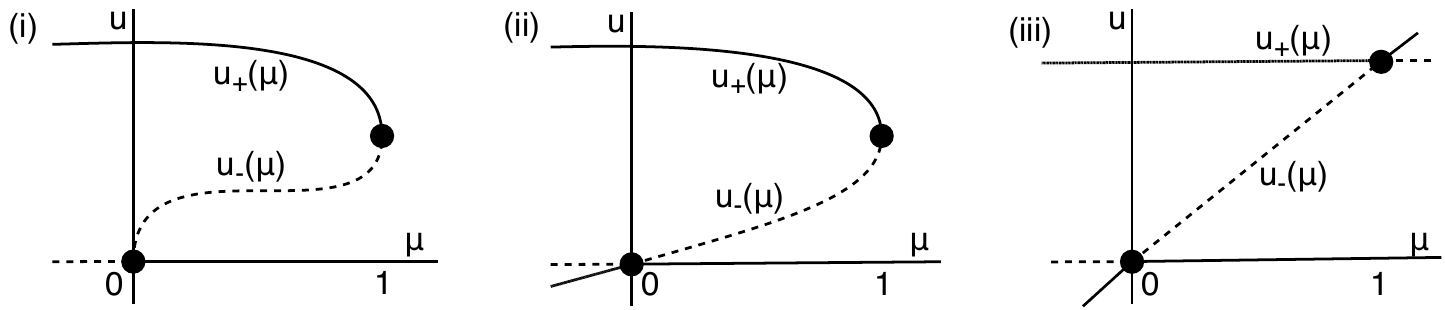}
\caption{\revised{Panels~(i)-(iii) illustrate, respectively, the zero sets of the sample functions $f(u,\mu)=-\mu u+2u^3-u^5$, $f(u,\mu)=-\mu u+2u^2-u^3$, and $f(u,\mu)=u(u-\mu)(1-u)$ in the upper half-plane to which our results apply. Solid and dashed curves indicate equilibria that are, respectively, linearly stable and unstable for $\dot{u}=f(u,\mu)$. Note that the cubic-quintic nonlinearity $f(u,\mu)=-\mu u+2u^3-u^5$ illustrated in panel~(i) satisfies Hypothesis~\ref{h1}.}}
\label{f:3}
\end{figure}

We consider the lattice dynamical system
\begin{equation}\label{LDST}
\dot{u}_{m,n} = d(\Delta u)_{n,m} + f(u_{n,m},\mu), \qquad (n,m)\in\mathbb{Z}^2
\end{equation}
posed on the Banach space $\ell^\infty(\mathbb{Z}^2)$ of bounded functions from $\mathbb{Z}^2$ into $\mathbb{R}$ equipped with the supremum norm $|\cdot|_\infty$ and the corresponding steady-state system
\begin{equation}\label{LDS}
d(\Delta u)_{n,m} + f(u_{n,m},\mu) = 0, \qquad (n,m)\in\mathbb{Z}^2,
\end{equation}
where $\Delta$ denotes the discrete Laplace operator defined in (\ref{e:lo}). \revised{Our results apply to the three classes of bistable nonlinearities $f:\mathbb{R}^2\to\mathbb{R}$ illustrated in Figure~\ref{f:3}, where the endpoints of the bistability regions correspond to pitchfork, fold, or transcritical bifurcations.}


\subsection{Analytical results}

\revised{For the sake of clarity, we focus initially on the case illustrated in Figure~\ref{f:3}(i) and make the following assumption on the nonlinearity $f:\mathbb{R}^2\to\mathbb{R}$ in the steady-state system (\ref{LDS}), which is met by the cubic-quintic nonlinearity $f(u,\mu)=-\mu u+2u^3-u^5$.}

\begin{hyp} \label{h1}
The function $f:\mathbb{R}^2\to\mathbb{R}$ is smooth and satisfies the following:
\begin{compactenum}[(i)]
\item The function $f$ is odd in $u$ so that $f(-u,\mu)=-f(u,\mu)$ for all $(u,\mu)$.
\item The set of roots of $f(u,\mu)$ is as shown in Figure~\ref{f:3}(i). In particular, for each $\mu\in(0,1)$, the function $f(u,\mu)$ has exactly three nonnegative zeros, namely $u=0$ and $u=u_\pm(\mu)$ with $0<u_-(\mu)<u_+(\mu)$, and these satisfy $f^\prime(0,\mu),f^\prime(u_+(\mu),\mu)<0<f^\prime(u_-(\mu),\mu)$.
\item At $\mu=0$, the zeros $u=0$ and $u=\pm u_-(\mu)$ collide in a generic subcritical pitchfork bifurcation.
\item At $\mu=1$, the zeros $u=u_\pm(\mu)$ collide in a generic saddle-node bifurcation.
\end{compactenum}
\end{hyp}

\begin{figure}
\centering
\includegraphics{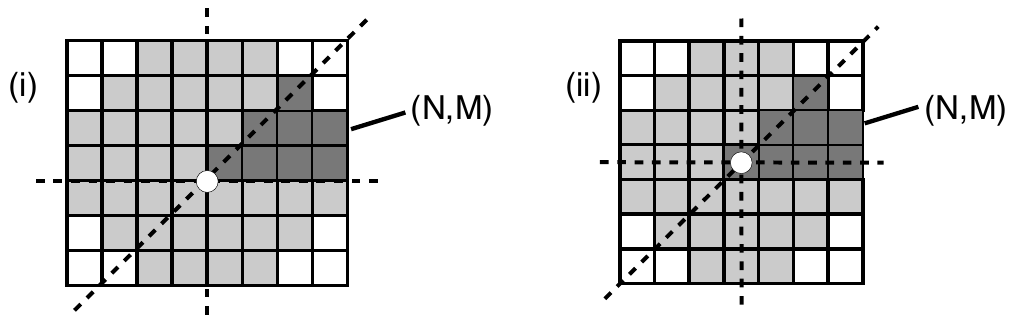}
\caption{\revised{Lattice points correspond to the midpoints of the squares shown here. Panel~(i) shows how the off-site square patch with $(N,M)=(4,2)$ is generated from the pattern $\bar{u}^{(N,M)}$ defined on the wedge $1\leq m\leq n$ by reflecting across the dashed lines through the point $(\frac12,\frac12)$ indicated by the white disk. Similarly, panel~(ii) shows how the on-site square patch with $(N,M)=(4,2)$ is generated by reflecting across the dashed lines through the point $(1,1)$ indicated by the white disk.}}
\label{f:4}
\end{figure}

\begin{figure}
\centering
\includegraphics[scale=0.4]{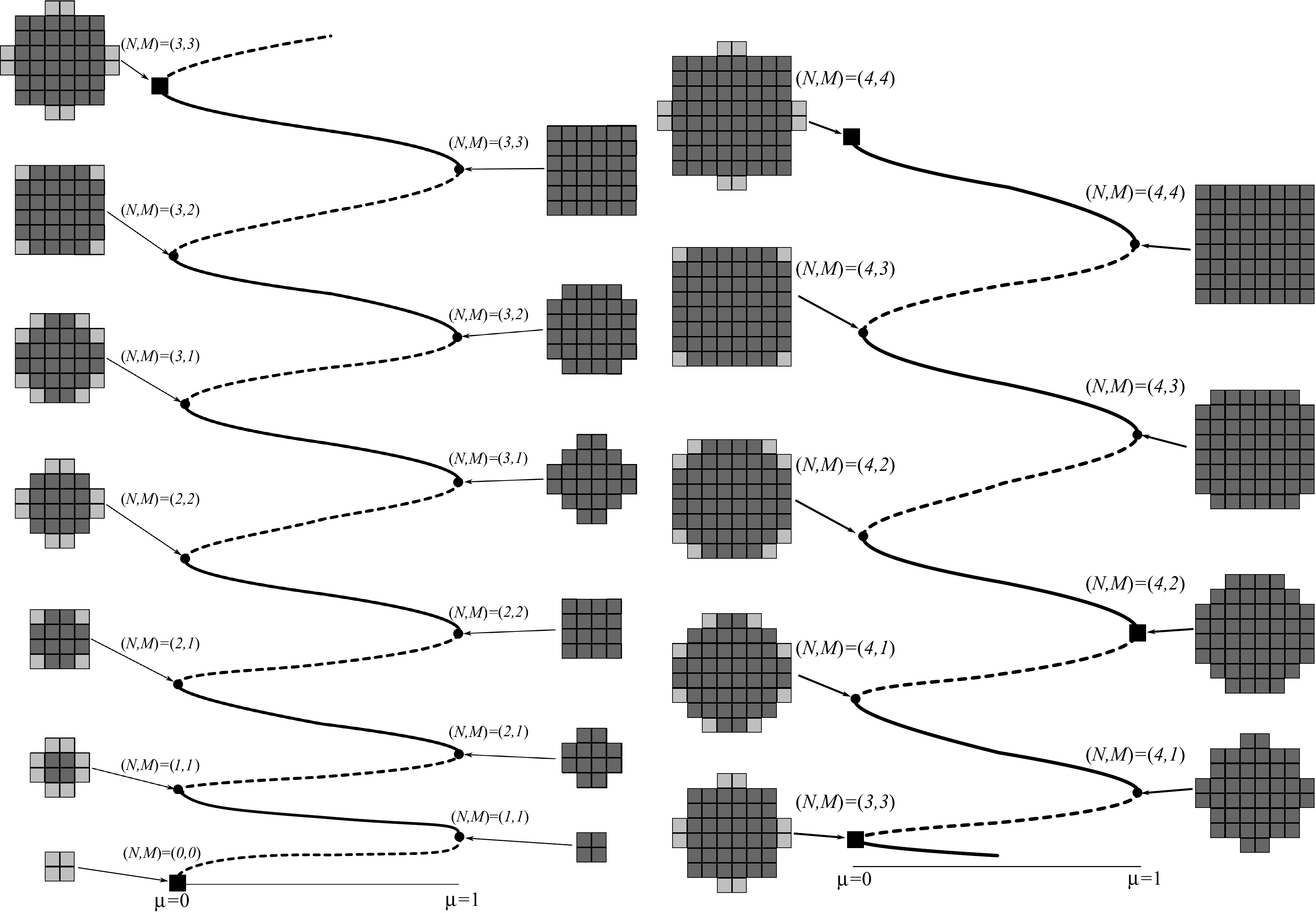}
\caption{Shown is the set $\Gamma(4)$ and the associated patterns $\bar{u}^{N,M}(\mu)$ along  the solid parts of $\Gamma(4)$ and $\bar{v}^{N,M}(\mu)$ along its dashed parts. The set $\mathcal{E}$ is indicated by black squares. Outside a small neighborhood of the fold points, the patterns along the solid  curves are stable, whilst those along the dashed curves are unstable.}
\label{f:5}
\end{figure}

We are interested in the existence of solutions to \revised{the steady-state system} (\ref{LDS}) in the anti-continuum limit $0<|d|\ll1$. We first set $d=0$ so that (\ref{LDS}) reduces to the equation $f(u_{n,m},\mu)=0$ with $(n,m)\in\mathbb{Z}^2$. Hypothesis~\ref{h1} implies that any nonnegative solution $u_{m,n}$ of $f(u_{n,m},\mu)=0$ with $0\leq\mu\leq1$ lies in the set $\{0,u_\pm(\mu)\}$. For each pair $(N,M)$ of integers with $0<M\leq N$, and each $0\leq\mu\leq1$, we define the patterns $\bar{u}^{N,M}(\mu)$ and $\bar{v}^{N,M}(\mu)$ on the wedge $0\leq m\leq n$ by
\begin{equation}\label{e:dzeropatterns}
\bar{u}^{(N,M)}_{n,m}(\mu) := \left\{\begin{array}{ll}
u_+(\mu) & 1\leq m\leq n<N \\
u_+(\mu) & n=N,\; 1\leq m\leq M, \\
0        & \mathrm{otherwise}
\end{array}\right.
\qquad
\bar{v}^{(N,M)}_{n,m}(\mu) := \left\{ \begin{array}{ll}
u_+(\mu) & 1\leq m\leq n<N \\
u_+(\mu) & n=N,\; 1\leq m<M \\
u_-(\mu) & n=N,\; m=M \\
0        & \mathrm{otherwise.}
\end{array}\right.
\end{equation}
Note that $N$ corresponds roughly to the horizontal extent of the pattern and $M$ to the vertical height in the $N$th column. \revised{The patterns defined in (\ref{e:dzeropatterns}) are pairwise distinct for $0<\mu<1$. Hypothesis~\ref{h1}(iii)-(iv), see also Figure~\ref{f:3}(i), implies that $\lim_{\mu\searrow0}u_-(\mu)=u_+(0)$ and $\lim_{\mu\nearrow1}u_-(\mu)=u_+(1)$, and we conclude that these patterns connect} to each other as follows at $\mu=0,1$:
\[
\begin{array}{lcl}
\bar{u}^{(N,M)}(0) = \bar{v}^{(N,M+1)}(0) & \quad & 1\leq M\leq N-1 \\
\bar{u}^{(N,N)}(0) = \bar{v}^{(N+1,1)}(0) && 1\leq N \\
\bar{u}^{(N,M)}(1) = \bar{v}^{(N,M)}(1)   && 1\leq M\leq N.
\end{array}
\]
\revised{As indicated in Figure~\ref{f:4}, we can extend the patterns defined in (\ref{e:dzeropatterns}) to $D_4$-symmetric off-site or on-site patterns defined on $\mathbb{Z}^2$ by reflecting profiles across the diagonal, horizontal, and vertical lines through the points $(\frac12,\frac12)$ and $(1,1)$, respectively.} Finally, for each integer $N_*>1$, we define the set
\[
\Gamma(N_*) := \bigcup_{1\leq M\leq N\leq N_*} \bigcup_{0\leq\mu\leq1} \left\{ (\bar{u}^{(N,M)}(\mu),\mu),\; (\bar{v}^{(N,M)}(\mu),\mu) \right\} \subset \ell^\infty(\mathbb{Z}^2)\times\mathbb{R}
\]
and note \revised{that this set forms a smooth curve with end points given by $u=0$ and $u=\bar{u}^{(N_*,N_*)}(0)$}. We also define the discrete set
\[
\mathcal{E} =
\left\{ (\bar{u}^{(N,N)}(0),0):\; 3\leq N \right\} \cup
\left\{ (\bar{u}^{(N,M)}(1),1):\; 2\leq M\leq N-2 \right\}
\subset \ell^\infty(\mathbb{Z}^2)\times\mathbb{R}
\]
of patterns near which we are not able to prove persistence \revised{and refer to Remarks~\ref{r:1} and~\ref{r:2} for a discussion of why our proofs fail near these bifurcation points}. We can now formulate our main theorem and refer to Figure~\ref{f:5} for an illustration of this result.

\begin{thm}\label{t1}
Assume that $f$ satisfies Hypothesis~\ref{h1}. For each $\delta_*>0$ and each integer $N_*\geq2$, there is a constant $d_*>0$ such that the following is true for each $0<d<d_*$:
\begin{compactitem}
\item Persistence: Each component of the set $U_{\delta_*}(\Gamma(N_*))\setminus U_{2\delta_*}(\mathcal{E})$ contains two unique, nonempty, continuous branches of, respectively, on-site and off-site $D_4$-symmetric solutions of \revised{the steady-state system} (\ref{LDS}). Furthermore, these branches are smooth and $C^1$-close to $\Gamma(N_*)$ for each fixed $d$, depend smoothly on $d$, and their limit as $d\searrow0$ is contained in $\Gamma(N_*)$.
\item Nonlinear stability: For $\mu\in(\delta_*,1-\delta_*)$, the patterns emerging from $\bar{u}^{(N,M)}(\mu)$ are nonlinearly stable for \revised{the dynamical system} (\ref{LDST}) posed on $\ell^\infty(\mathbb{Z}^2)$, whilst those emerging from $\bar{v}^{(N,M)}(\mu)$ are linearly unstable with eight unstable eigenvalues for $M\neq N-1$ and four unstable eigenvalues for $M=N-1$.
\item Transverse crossing: The linearization of \revised{the dynamical system}  (\ref{LDST}) about the $D_4$-symmetric patterns posed on $\ell^\infty(\mathbb{Z}^2)$ has precisely four ($M=N-1$) or eight ($M\neq N-1$) eigenvalues that cross transversely through the origin near each fold bifurcation, and there are no other eigenvalues on the imaginary axis.
\end{compactitem}
\end{thm}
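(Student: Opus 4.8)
The plan is to carry out a Lyapunov–Schmidt reduction at the anti-continuum limit $d=0$, treating $d$ as the small parameter. Near any point $(\bar w,\mu_0)\in\Gamma(N_*)\setminus U_{2\delta_*}(\mathcal E)$, with $\bar w$ equal to $\bar u^{(N,M)}$ or $\bar v^{(N,M)}$, the decoupled equation $f(u_{n,m},\mu)=0$ has the solution $\bar w$, and by Hypothesis~\ref{h1}(ii) every active site sits at a zero of $f$ where $f'\neq0$. Consequently the linearization of the $d=0$ steady-state map at $(\bar w,\mu)$ is the diagonal operator with entries $f'(\bar w_{n,m},\mu)$ on $\ell^\infty(\mathbb Z^2)$, which is boundedly invertible \emph{except} when some site sits at $u_-$ with $f'(u_-,\mu)>0$ but the parameter is near $0$ or $1$ — precisely the situation excluded by deleting neighborhoods of $\mathcal E$ and by restricting to the solid/dashed strata away from the folds. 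Away from folds, then, the implicit function theorem on $\ell^\infty(\mathbb Z^2)$ (using smoothness of $f$ and boundedness of $\Delta$) gives, for each small $d$, a unique nearby solution branch, smooth in $(\mu,d)$ and $C^1$-close to $\Gamma(N_*)$; imposing the symmetry is automatic since the $d=0$ profile is $D_4$-symmetric and the fixed-point subspace is invariant. This handles the persistence claim on the strata; at a fold point of $\Gamma(N_*)$ (a point of $\mathcal E$-complement where $u_-$ meets $u_+$ and $\bar u^{(N,M)}$ meets $\bar v^{(N,M)}$), the single collapsing site contributes a one-dimensional kernel, so a standard Lyapunov–Schmidt reduction produces a scalar bifurcation equation; blow-up (rescaling near the fold, as advertised in the introduction) shows the reduced equation is a nondegenerate quadratic $c_1(\mu-\mu_*(d)) + c_2 \xi^2 = O(\text{h.o.t.})$ in the reduced variable $\xi$, yielding exactly a single quadratic fold for small $d$. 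Because the off-site and on-site patterns lie in distinct fixed-point subspaces and the corresponding folds occur at slightly different $\mu$-values, one obtains the two claimed branches per component.

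For the spectral claims I would compute the spectrum of the linearization $L(d) := d\Delta + f'(u(d),\mu)$ about the persistent steady state, viewed on $\ell^\infty(\mathbb Z^2)$. At $d=0$ this is the diagonal operator with eigenvalues $f'(\bar w_{n,m},\mu)$: sites at $0$ or $u_+$ give eigenvalue $f'(\cdot)<0$, and the only non-negative contributions come from sites at $u_-$, each contributing a positive eigenvalue $f'(u_-,\mu)>0$. Counting $D_4$-orbit sizes of the exceptional site in $\bar v^{(N,M)}$: a generic off-diagonal, non-column-edge site has an orbit of size $8$, while the site with $M=N-1$ sits one step inside the corner and has an orbit of size $4$ (there is a coincidence forced by the diagonal reflection together with the column edge), which is exactly the $8$-versus-$4$ dichotomy in the statement. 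For $u^{(N,M)}$ all diagonal entries are strictly negative, so at $d=0$ the spectrum lies in $\{\Re\lambda<0\}$ with a uniform gap on the relevant compact $\mu$-interval; perturbation in $d$ (the full spectrum of $d\Delta$ is contained in $[-8d,0]$, hence a small bounded perturbation) keeps the spectrum strictly in the left half-plane, giving linearized and then nonlinear stability via standard $\ell^\infty$ semiflow estimates (local smoothness of $f$ plus the spectral gap). For $\bar v^{(N,M)}$ the finitely many positive eigenvalues persist under the $d$-perturbation and the rest of the spectrum stays negative, giving the stated $8$ or $4$ unstable eigenvalues.

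The transverse-crossing statement is the heart of the fold analysis: near a fold point of $\Gamma(N_*)$, the collapsing site (and its $D_4$-orbit, of size $8$ or $4$) furnishes an eigenvalue of $L(0)=d\Delta+f'(u(d),\mu)$ that equals $f'(u_-(\mu),\mu)\to0$ as $\mu\to\mu_*$ — but the cleaner route is to differentiate the scalar Lyapunov–Schmidt bifurcation equation: transversality of the eigenvalue crossing is equivalent to non-vanishing of the coefficient $c_1$ of $(\mu-\mu_*(d))$ together with $c_2\neq0$ for the quadratic term, which the blow-up normal form already established. The number of crossing eigenvalues equals the dimension of the relevant kernel, i.e. the size of the $D_4$-orbit of the collapsing site ($8$ generically, $4$ when $M=N-1$), and the same spectral-gap argument shows no other eigenvalues touch the imaginary axis. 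The main obstacle I expect is the fold analysis near points of $\mathcal E$ and the verification that the blow-up rescaling genuinely produces a nondegenerate quadratic (rather than a higher-order degeneracy) uniformly in the relevant range of $(N,M)$ with $N\le N_*$; controlling the higher-order terms of the reduced equation and ruling out additional near-zero eigenvalues coming from \emph{pairs} of almost-simultaneously-collapsing sites (which is exactly why $\mathcal E$ must be excised) is the delicate part, and is presumably where Remarks~\ref{r:1}–\ref{r:2} come in.
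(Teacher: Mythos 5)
Your treatment of the regime $\mu\in(\delta_*,1-\delta_*)$ matches the paper (Lemma~\ref{lem:IFT}: diagonal invertible linearization, implicit function theorem, unstable count $=$ number of sites at $u_-$, i.e.\ the $D_4$-orbit size of the exceptional cell). The genuine gap is in your fold analysis. You assume that at a fold point of $\Gamma(N_*)$ ``the single collapsing site contributes a one-dimensional kernel'' so that a standard scalar Lyapunov--Schmidt reduction applies. This is false at the organizing centers: the folds of the small-$d$ branches occur at $\mu_l(d)=3d^{2/3}+\mathcal{O}(d)$ and $\mu_r(d)=1-2d+\mathcal{O}(d^{3/2})$, i.e.\ they limit onto $(\mu,d)=(0,0)$ and $(1,0)$, where $f_u(0,0)=0$ (pitchfork of $f$) and $f_u(u_\pm(1),1)=0$ (fold of $f$). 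Hence at $\mu=0$ \emph{every} background site is degenerate (an infinite-dimensional kernel), and at $\mu=1$ \emph{every} patch site is degenerate (a kernel of dimension growing with $N$); there is no neighborhood of the fold in which only the critical cell is nearly singular uniformly as $d\searrow0$, and likewise your ``spectral gap'' for the non-critical sites degenerates like $\mathcal{O}(\mu)$ resp.\ $\mathcal{O}(1-\mu)$ rather than being uniform. The paper's proof works around exactly this: it first solves for the nondegenerate cells ($I_+$ near $\mu=0$, $I_0$ near $\mu=1$) by the implicit function theorem, then performs an anisotropic blow-up ($\mu=\nu^2$, $d=\nu^3\tilde d$, $u_{n,m}=\nu^{\rho_{n,m}}\tilde u_{n,m}$ with $\rho_{n,m}$ the distance to the patch near $\mu=0$; $\mu=1-\nu^2$, $d=\nu^2\tilde d$, $u=1+\nu\tilde u$ near $\mu=1$) so that the critical cell's rescaled equation carries the coefficient $2\tilde d$ while its neighbors carry $\tilde d$, solves the infinite tail system via invertibility of $-1+\tilde dB$ on the relevant $\tilde d$-range, and only then reads off a nondegenerate scalar fold; the eigenvalue crossing is then obtained from a scaled spectral reduction in which the non-crossing spectrum is merely $\mathcal{O}(\nu^2)$ to the left of the axis, not uniformly bounded away.

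Two further points. First, your hope that the blow-up yields a nondegenerate quadratic ``uniformly in $(N,M)$'' fails even in cases the theorem \emph{does} cover: for the left folds with $M=N$, $N=1,2$, and for the right folds with $M=1$, the leading-order rescaled equation at the critical cell coincides with that of a non-critical neighbor, and the paper must expand to the next order in $\nu$ and rescale again (Lemmas~\ref{lem:Pitchfork2} and~\ref{lem:Saddle2}) to break the tie; your proposal has no mechanism for these cases. Relatedly, your diagnosis of the excised set $\mathcal{E}$ (``pairs of almost-simultaneously-collapsing sites'') is not quite right: the obstruction there is that the critical cell's reduced equation agrees with a neighbor's to leading (and, for $\bar u^{(N,N)}(0)$ with $N\geq3$, to very high) order, so the blow-up fails to separate them (Remarks~\ref{r:1}--\ref{r:2}); it is a degeneracy of the reduced equations, not an extra kernel count. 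Second, your group-theoretic explanation of the $8$-versus-$4$ dichotomy is off: the orbit of size four arises when the exceptional cell lies on the diagonal symmetry axis (e.g.\ the critical cell $(N,M+1)=(N,N)$ when $M=N-1$ at the left folds), not because a site ``one step inside the corner'' has a coincidence; a site at $(N,N-1)$ has a full orbit of eight.
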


In \S\ref{s2.2}, we will discuss the existence of asymmetric branches that emerge from the four or eight eigenvalues that cross the origin near each fold bifurcation.

Note that Hypothesis~\ref{h1} requires a pitchfork bifurcation at $\mu=0$ and a fold bifurcation at $\mu=1$. The conclusions of Theorem~\ref{t1} hold more generally when the system exhibits a generic transcritical bifurcation at $\mu=0$ or at $\mu=1$, and we provide the proof of Theorem~\ref{t1} for these cases in \S\ref{sec:app}.


\subsection{Numerical results}\label{s2.2}

\revised{We illustrate our analytical results with numerical computations of the system}
\begin{equation}\label{LDS_SH}
d\Delta u_{n,m} - \mu u_{n,m} + 2u_{n,m}^3 - u_{n,m}^5 = 0, \qquad
(n,m)\in\mathbb{Z}^2.
\end{equation}
\revised{Note that the cubic-quintic nonlinearity $f(u,\mu)=-\mu u+2u^3-u^5$ appearing in (\ref{LDS_SH}) satisfies Hypothesis~\ref{h1}: in particular, it is odd in $u$ and has roots given by $u=0$ and $u_\pm(\mu)=\sqrt{1\pm\sqrt{1-\mu}}$.}

\revised{We comment briefly on the numerical algorithms we implemented to solve (\ref{LDS_SH}) numerically. To compute $D_4$-symmetric profiles, we use the computational domain $\{(n,m): 1\leq m\leq n\leq N_\mathrm{d}\}$ shown in dark in Figure~\ref{f:4} with $N_\mathrm{d}=50$ with Neumann boundary conditions at the right boundary $n=N_\mathrm{d}$ and the $D_4$-symmetry conditions explained in Figure~\ref{f:4} at the bottom boundary $m=1$ and the diagonal $n=m$. We started with initial profiles at $d=0$ and then continued in $d$ or $\mu$ using a secant continuation code. Stability of profiles along each branch was assessed by computing the spectrum of the linearization of (\ref{LDS_SH}) evaluated at the full profile extended to a square of size $2N_\mathrm{d}\times2N_\mathrm{d}$ with Neumann boundary conditions using the routine \textsc{eig} in \textsc{matlab}. Similarly, asymmetric solutions were computed on a square of size $2N_\mathrm{d}\times2N_\mathrm{d}$ with Neumann boundary conditions at the boundary. The initial asymmetric profiles were constructed based on the expected symmetries derived and discussed below.}

\begin{figure}
\centering
\includegraphics[scale=0.9]{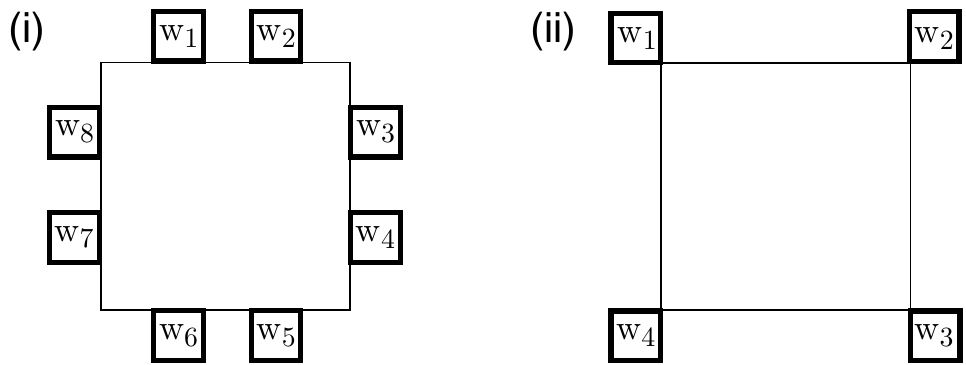}
\caption{Indicated are the eigenfunctions belonging to the critical eight (left) and four (right) eigenvalues of $D_4$-symmetric patterns that cross the origin near each fold bifurcations for $M\neq N-1$ and $M=N-1$, respectively. The critical eigenspace is parametrized by $(w_1,\ldots,w_8)\in\mathbb{R}^8$ and $(w_1,\ldots,w_4)\in\mathbb{R}^4$, respectively, and the $D_4$-symmetry acts by rotations by $90^\circ$ and reflections across the diagonal, horizontal, and diagonal lines.}
\label{f:6}
\end{figure}

\begin{figure}[th!]
\centering
\includegraphics[width=0.95\textwidth]{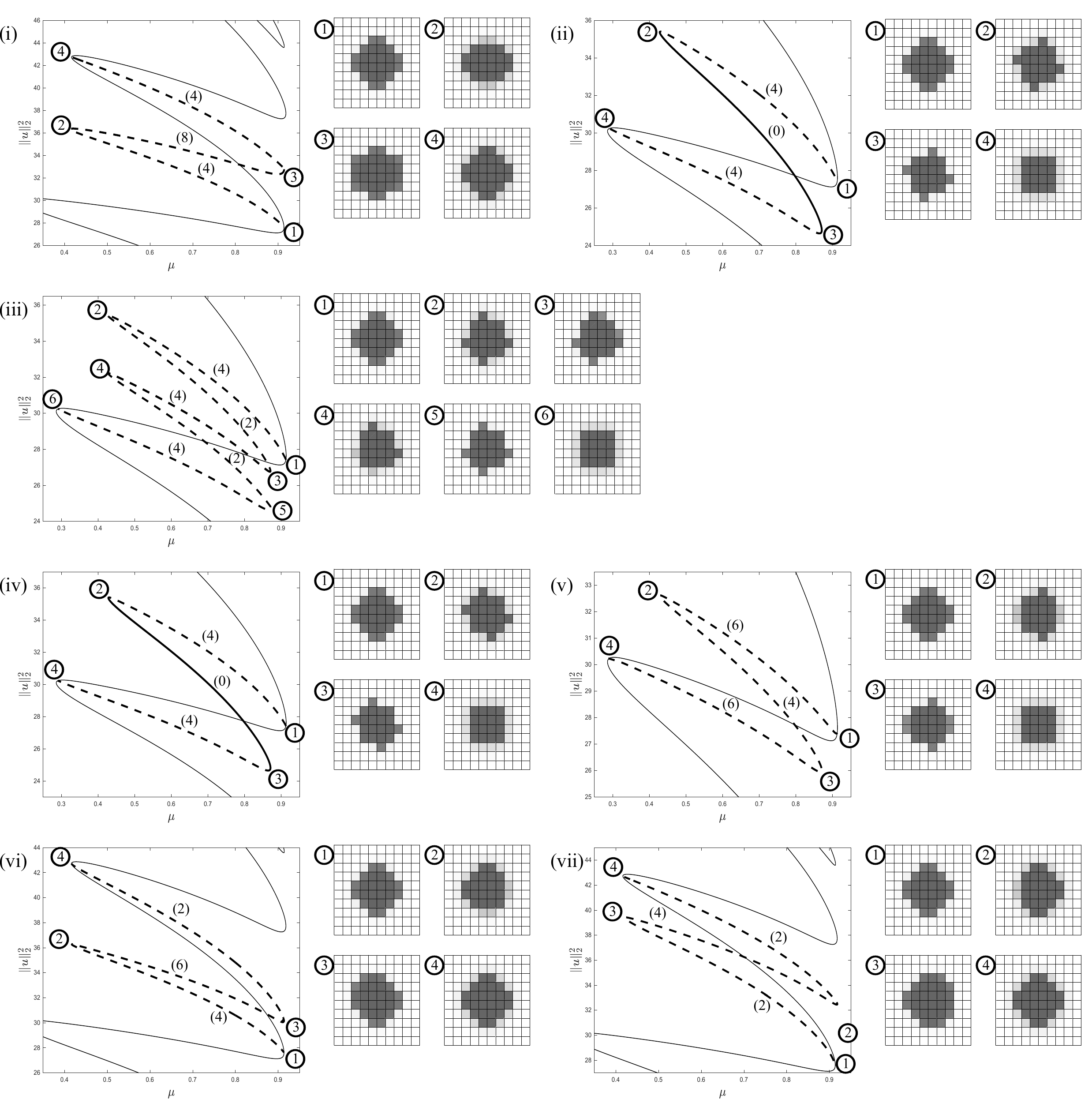}
\caption{Panel~(i)-(vii) show seven branches of asymmetric square patterns that bifurcate from the primary \revised{snaking branch (shown in light solid) taken from Figure~\ref{f:2}} near the off-site pattern $\bar{u}^{(3,1)}$ for $\mu\approx1$. The branches in panels (i)-(iii) bifurcate in three distinct one-dimensional irreducible representations of $D_4$ and are guaranteed by the equivariant branching lemma. The branches in panels (iv)-(v) and (vi)-(vii) bifurcate in two orthogonal planes on which $D_4$ acts via its two-dimensional irreducible representation. \revised{Numerical computation of spectra of the linearization about these patterns allowed us to identify stable (solid) and unstable (dashed) branches; the numbers in parentheses give the number of unstable eigenvalues of the linearization.}}
\label{f:7}
\end{figure}

\paragraph{Asymmetric branches.}
\revised{First, we focus on the anti-continuum limit $0<d\ll1$ of (\ref{LDS_SH}) and investigate bifurcations to asymmetric patterns near each of the fold bifurcations described in Theorem~\ref{t1}.} Near each fold bifurcation, precisely four ($M=N-1$) or eight ($M\neq N-1$) eigenvalues of the linearization about the $D_4$-symmetric patterns described in Theorem~\ref{t1} cross the origin transversely. Equivariant bifurcation theory \revised{\cite{Hoyle, GSS2}} implies that the symmetry group $D_4$ leaves the eigenspaces associated with these eigenvalues invariant, and we illustrate the corresponding eigenfunctions and the action of $D_4$ on these spaces in Figure~\ref{f:6}. 

We focus first on the case $M\neq N-1$. Using the form of the $D_4$-action, it is not difficult to see that the eight-dimensional eigenspace is the orthogonal sum of four one-dimensional subspaces on which $D_4$ acts with its four unique distinct one-dimensional irreducible representations and a four-dimensional subspace on which $D_4$ acts with its unique two-dimensional irreducible representation. The equivariant branching lemma \revised{(see, for instance, \cite[Theorem~3.3 in \S{XIII.3}]{GSS2} or \cite[Theorem~4.4 in \S4.2]{Hoyle})} guarantees the existence of four distinct branches in each of the one-dimensional subspaces: one of these branches is the primary branch that undergoes a fold, and we show numerical computations of the remaining three branches that emerge near the right fold for $(N,M)=(3,1)$ in Figure~\ref{f:7}. Within the remaining four-dimensional eigenspace with the two-dimensional irreducible representation of $D_4$, we expect that there should generically be two distinct pairs of eigenvalues that cross independently (see, for instance, \cite{Hoyle, GSS2}): each of these two bifurcations will lead generically to two asymmetric branches, giving a total of four branches, which are shown again in Figure~\ref{f:7} for $(N,M)=(3,1)$. Note that each bifurcating branch returns to the primary branch near a different fold. We also point out that, though each branch is initially unstable, some of the branches stabilize later (see panels~(ii) and~(iv) in Figure~\ref{f:7}).

Next, consider the case $M=N-1$: it is again not difficult to see that the isotypical decomposition of the four-dimensional critical eigenspace consists of two distinct one-dimensional representations and the two-dimensional representation of $D_4$. We therefore expect four distinct branches to emerge. We did not compute these branches.

\begin{figure}
\centering
\includegraphics[scale=0.8]{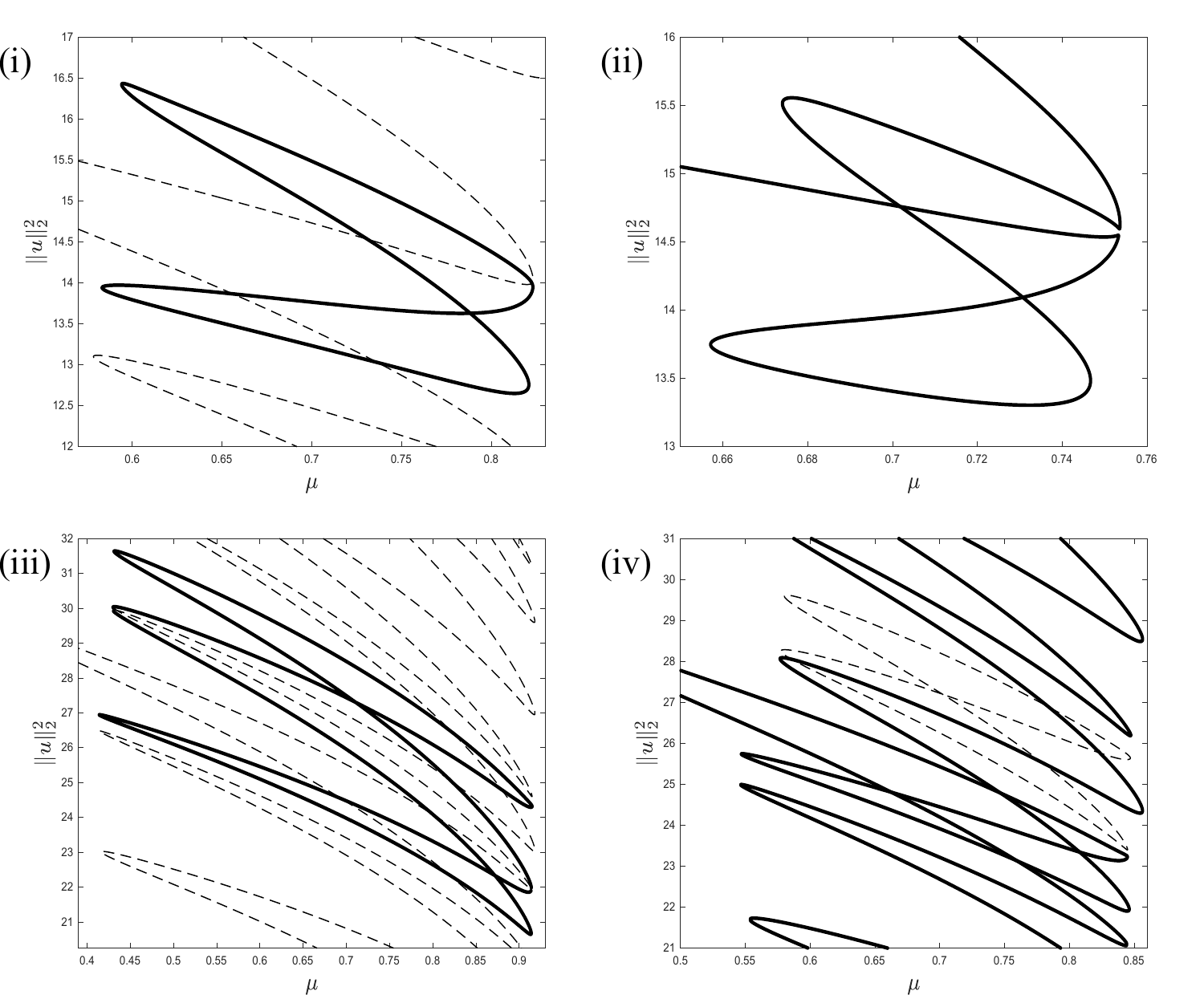}
\caption{Panel~(i) shows an isola (solid) for $d=0.12$ that is about to collide with the primary solution branch (dashed) at the rightmost fold of the pattern $\bar{u}^{(4,1)}$. Panel~(ii) shows the rearranged branch (solid) after collision for $d=0.2$. Similarly, panel~(iii) shows an isola (solid) for $d=0.05$ that is about to collide with the primary branch (dashed) at the rightmost fold of the pattern $u^{(5,1)}$. Panel~(iv) shows the rearranged branch (solid) and a new isola (dashed) that emerges after the collision for $d=0.1$.}
\label{f:8}
\end{figure}

\paragraph{Switchbacks.}
We revisit Figure~\ref{f:2} and observe that the bifurcation diagram described in Theorem~\ref{t1} agrees very well with the numerical diagram shown in Figure~\ref{f:2}(i) for $d=0.001$. In particular, even though our analysis is not able to verify the continuation through all folds along the branch, the numerical computations indicate that for sufficiently small $d > 0$ patterns grow in a very regular fashion by first adding new cells at the middle of each face and then adding additional cells to either side at each pair of folds.

\begin{figure}
\centering
\includegraphics[scale=0.8]{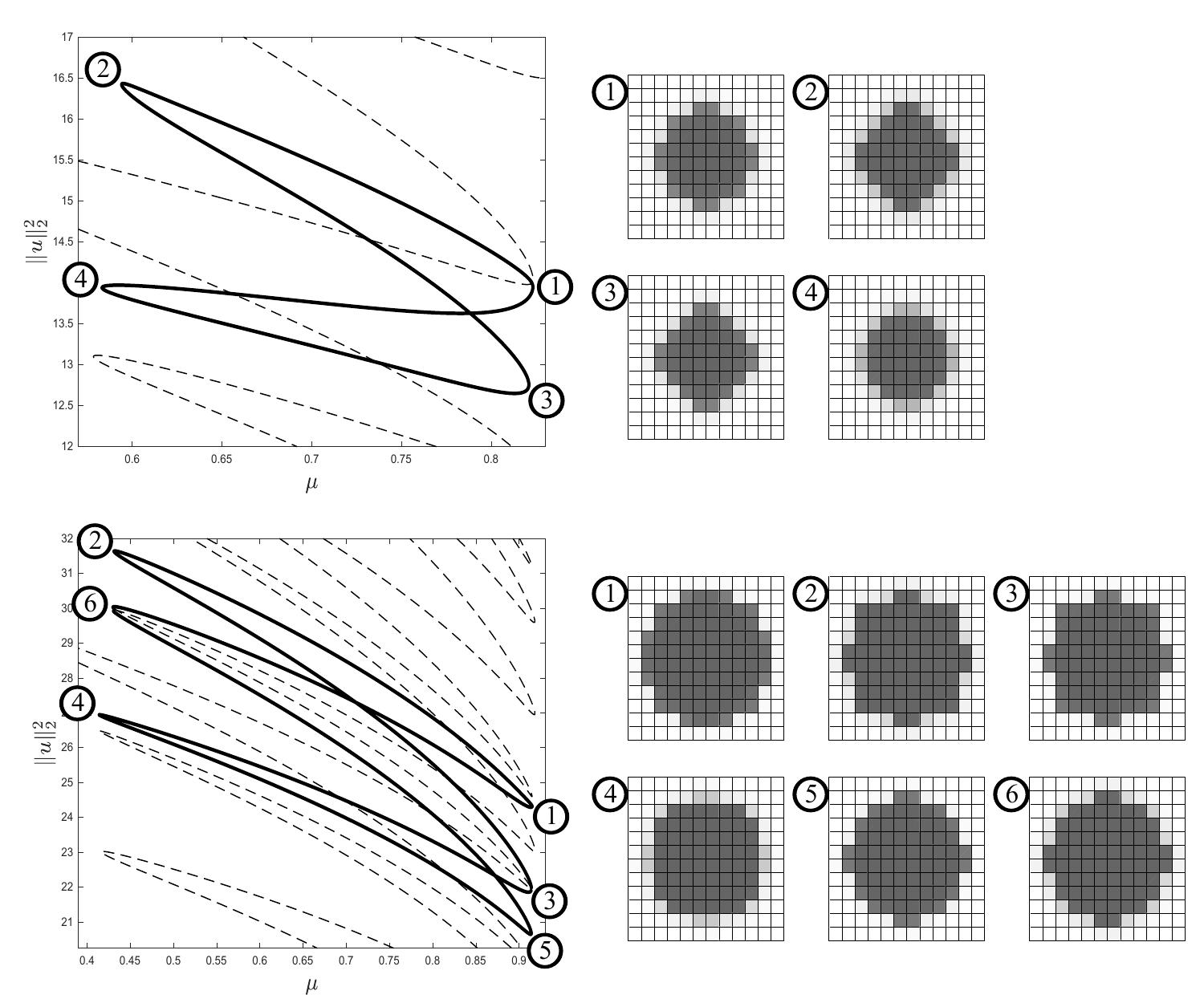}
\caption{The top and bottom panels show the solution profiles along the isolas from Figure~\ref{f:8}(i) and~(iii), respectively.}
\label{f:9}
\end{figure}

Next, we focus on when and how the regular bifurcation diagram that appears for $0<d\ll1$ changes to the more complicated diagram shown in Figure~\ref{f:2}(iii) for larger $d$. Taylor and Dawes \cite{Taylor} observed that the bifurcation branch begins to turn back on itself when isolas collide with the primary solution branch upon increasing $d$. More precisely, the collision is a codimension-two cusp bifurcation where a fold along the isola and a fold along the primary solution branch collide with each other. Taylor and Dawes also showed that several such switchbacks occur for nearby values of the coupling strength $d$. The collision process is illustrated in Figure~\ref{f:8}(i)-(ii) where we see that an isola and the primary solution branch collide at the rightmost fold of the pattern $\bar{u}^{(4,1)}$: the collision is followed by a rearrangement of the two branches into a single more complex branch structure. Figure~\ref{f:8}(iii)-(iv) shows that a similar collision of an isola and the primary branch arises near the rightmost fold of the pattern $\bar{u}^{(5,1)}$: in this case, the collision takes place simultaneously at two folds, leading to a rearrangement of the solution branch and the appearance of a new isola. In Figure~\ref{f:9}, we show the profiles of the patterns along the isolas that collide with $\bar{u}^{(N,1)}$ for $N=4,5$ just prior to their collisions.

\begin{figure}
\centering
\includegraphics{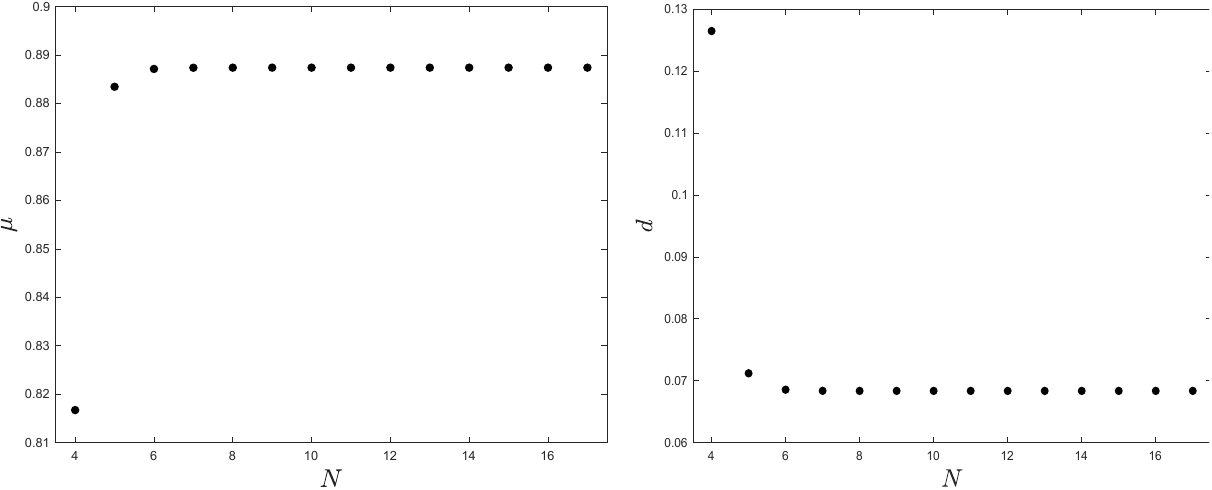}
\caption{Shown are the locations of the codimension-two cusp bifurcations caused by collisions of isolas with the solution branch at the rightmost fold bifurcations of the patterns $\bar{u}^{N,1}$ in $(N,\mu)$-space (left) and $(N,d)$-space (right). Note that the sequence $(\mu_N,d_N)$ appears to converge exponentially to the point $(\mu_\infty,d_\infty)=(0,887,0.068)$ as $N$ increases.}
\label{f:10}
\end{figure}

Our goal here is to compute the locations of the cusp bifurcations more systematically. To do so, we set up an extended system for the variables $(u,\mu,d)$ that consists of finding roots $u$ of (\ref{LDS_SH}) for parameter values $(\mu,d)$ at which the linearization of (\ref{LDS_SH}) in $u$ evaluated at $u$ has a two-dimensional null space: each solution to this extended problem corresponds to a cusp bifurcation point. We use Newton's method with initial conditions given by the profiles $\bar{u}^{(N,1)}$ and parameter values $\mu$ at fold points along the primary branch. Using this algorithm, we identified a series of cusp bifurcations near the rightmost folds of the profiles $\bar{u}^{(N,1)}$ for $N=4,\ldots,16$. Figure~\ref{f:10} illustrates our findings and shows, in particular, that the corresponding parameter values $(\mu_N,d_N)$ appear to converge exponentially to a point $(\mu_\infty,d_\infty)$ as $N$ increases. This indicates that the transition from the regular bifurcation structure observed in Figure~\ref{f:2}(i) to the more complex diagram shown in Figure~\ref{f:2}(iii) appears quite suddenly as a critical threshold $d_\infty$ of the coupling strength $d$ is crossed.


\section{Proof of Theorem~\ref{t1}}\label{sec:Proofs}

In this section, we prove Theorem~\ref{t1}. Recall that we consider the lattice dynamical system (\ref{LDST})
\[
\dot{u}_{m,n} = d(\Delta u)_{n,m} + f(u_{n,m},\mu), \qquad (n,m)\in\mathbb{Z}^2.
\]
Defining the function
\[
\mathcal{F}: \;
\ell^\infty(\mathbb{Z}^2)\times\mathbb{R}\times\mathbb{R}\longrightarrow\ell^\infty(\mathbb{Z}^2), \quad
(u,\mu,d) \longmapsto \mathcal{F}(u,\mu,d), \quad
\mathcal{F}(u,\mu,d)_{n,m} := d(\Delta u)_{n,m} + f(u_{n,m},\mu),
\]
we see that the steady-state system (\ref{LDS})
\[
d(\Delta u)_{n,m} + f(u_{n,m},\mu) = 0, \qquad (n,m)\in\mathbb{Z}^2
\]
corresponding to \revised{the dynamical system} (\ref{LDST}) is given by $\mathcal{F}(u,\mu,d)=0$. Note that $\mathcal{F}$ is smooth in its arguments.

In \S\ref{s3.1}, we will show that each pattern that satisfies \revised{the steady-state system} (\ref{LDS}) with $d=0$ for some $0<\mu<1$ can be continued uniquely into the region $0<|d|\ll1$ and determine its stability properties with respect to \revised{the lattice dynamical system} (\ref{LDST}): this result shows that it suffices to understand how branches continue through $\mu=0$ and $\mu=1$. In \S\ref{s3.2}, we will characterize $D_4$-symmetric patterns to simplify the subsequent analysis. We then consider continuation through $\mu=0$ in \S\ref{s3.3}, continuation through $\mu=1$ in \S\ref{s3.4}, and extensions to transcritical bifurcations at $\mu=0,1$ in \S\ref{sec:app}.


\subsection{Continuation for $0<\mu<1$}\label{s3.1}

Our first result shows that any solution $u$ of \revised{the steady-state system} (\ref{LDS}) with $d=0$ and $0<\mu<1$ can be continued uniquely and smoothly into the region $0<|d|\ll1$.

\begin{lem}\label{lem:IFT}
Assume that Hypothesis~\ref{h1} is met. Let $K\subset(0,1)$ be a compact interval and choose a continuous function  $v^*:K\to\ell^\infty(\mathbb{Z}^2)$ so that $v^*_{n,m}(\mu)\in\{0,u_\pm(\mu)\}$ for all $\mu\in K$, then there are positive constants $d_*,\delta>0$ and a smooth function $u^*:K\times(-d_*,d_*)\to\ell^\infty(\mathbb{Z}^2)$ with $u^*(\mu,0)=v^*(\mu)$ for each $\mu\in K$ so that the following is true:
\begin{compactitem}
\item Persistence: $\mathcal{F}(u^*(\mu,d),\mu,d)=0$ for all $(\mu,d)\in K\times(-d_*,d_*)$.
\item Uniqueness: If $(u,\mu,d)\in\ell^\infty(\mathbb{Z}^2)\times K\times(-d_*,d_*)$ satisfies $\mathcal{F}(u,\mu,d)=0$ and $|u-v^*(\mu)|<\delta$, then $u=u^*(\mu,d)$.
\item Stability: If $N_-:=\#\{(n,m)\in\mathbb{Z}^2:v^*_{n,m}(\mu)=u_-(\mu)\}$ is finite, then the linearization $\mathcal{F}_u(u^*(\mu,d),\mu,d)$ posed on $\ell^\infty(\mathbb{Z}^2)$ has precisely $N_-$ eigenvalues in the open right half-plane and none on the imaginary axis.
 \end{compactitem}
\end{lem}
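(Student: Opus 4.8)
The plan is to apply the implicit function theorem to $\mathcal{F}$ at $d=0$, uniformly in $\mu\in K$, using the fact that $d=0$ completely decouples the lattice. First I would verify the setup: for $d=0$ the map $\mathcal{F}(u,\mu,0)$ acts componentwise as $u_{n,m}\mapsto f(u_{n,m},\mu)$, and $v^*(\mu)$ satisfies $\mathcal{F}(v^*(\mu),\mu,0)=0$ since each component $v^*_{n,m}(\mu)$ is a zero of $f(\cdot,\mu)$. The linearization $\mathcal{F}_u(v^*(\mu),\mu,0)$ is the bounded multiplication operator on $\ell^\infty(\mathbb{Z}^2)$ whose $(n,m)$-entry is $f^\prime(v^*_{n,m}(\mu),\mu)$. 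By Hypothesis~\ref{h1}(ii), $f^\prime(0,\mu)<0$, $f^\prime(u_-(\mu),\mu)>0$, and $f^\prime(u_+(\mu),\mu)<0$, and these quantities are continuous and hence bounded away from zero for $\mu$ in the compact interval $K\subset(0,1)$. Therefore $\mathcal{F}_u(v^*(\mu),\mu,0)$ is boundedly invertible with $\|\mathcal{F}_u(v^*(\mu),\mu,0)^{-1}\|$ bounded uniformly in $\mu\in K$.

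Next I would run the uniform implicit function theorem. Since $\mathcal{F}$ is smooth on $\ell^\infty(\mathbb{Z}^2)\times\mathbb{R}\times\mathbb{R}$ and the invertible linearization depends continuously (in operator norm) on $\mu\in K$ with a uniform bound on the inverse, a standard quantitative version of the implicit function theorem (Newton–Kantorovich, applied on a fixed-radius ball and with constants uniform over the compact parameter set $K$) produces $d_*,\delta>0$ and a map $u^*:K\times(-d_*,d_*)\to\ell^\infty(\mathbb{Z}^2)$, smooth in $d$ and continuous (indeed as smooth as $f$ allows) in $\mu$, with $u^*(\mu,0)=v^*(\mu)$, solving $\mathcal{F}(u^*(\mu,d),\mu,d)=0$; this gives Persistence. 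The same theorem delivers the local Uniqueness clause: any solution with $|u-v^*(\mu)|_\infty<\delta$ must coincide with $u^*(\mu,d)$, again with $\delta$ uniform in $\mu$ because the IFT constants are.

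For Stability, I would argue perturbatively from the spectrum at $d=0$. When $N_-<\infty$, only finitely many diagonal entries of $\mathcal{F}_u(v^*(\mu),\mu,0)$ equal $f^\prime(u_-(\mu),\mu)>0$ while all others equal $f^\prime(0,\mu)<0$ or $f^\prime(u_+(\mu),\mu)<0$; hence the spectrum of this multiplication operator is the closure of $\{f^\prime(v^*_{n,m}(\mu),\mu)\}$, which consists of $N_-$ values in the open right half-plane (with multiplicity) and the rest strictly in the open left half-plane, with a uniform spectral gap around the imaginary axis for $\mu\in K$. Now $\mathcal{F}_u(u^*(\mu,d),\mu,d)=\mathcal{F}_u(v^*(\mu),\mu,0)+d\Delta+o(1)$ as $d\searrow0$ (uniformly in $\mu\in K$), where $d\Delta$ is a bounded operator of norm $O(d)$; by upper semicontinuity of the spectrum for bounded-operator perturbations and the uniform gap, for $d$ small the spectrum stays off the imaginary axis, and the Riesz spectral projection onto the right-half-plane part varies continuously, so its rank is preserved: exactly $N_-$ eigenvalues lie in the open right half-plane and none on the imaginary axis. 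I expect the only genuinely delicate point to be making all of this uniform in $\mu$ over the compact set $K$ — i.e., extracting $\mu$-independent constants $d_*,\delta$ and a $\mu$-independent spectral gap — which follows from compactness of $K$ together with continuity of $u_\pm(\mu)$ and of $f$ and its derivatives, but should be stated carefully; the rest is a routine application of the implicit function theorem and standard spectral perturbation theory for bounded operators.
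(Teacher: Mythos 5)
Your proposal is correct and follows essentially the same route as the paper's proof: at $d=0$ the linearization is a diagonal multiplication operator with entries $f_u(v^*_{n,m}(\mu),\mu)$ bounded away from zero on the compact set $K$, so persistence and uniqueness follow from the implicit function theorem, and the eigenvalue count follows from the signs $f_u(u_-(\mu),\mu)>0$ and $f_u(0,\mu),f_u(u_+(\mu),\mu)<0$ together with spectral perturbation. The paper states this very tersely; your uniform Newton--Kantorovich formulation and the explicit Riesz-projection argument simply flesh out the same steps.
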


\begin{proof}
Set $d=0$, then $\mathcal{F}(v^*(\mu),\mu,0)=0$ for all $\mu\in K$. Furthermore, $\mathcal{F}_u(v^*(\mu),\mu,0)$ is given by
\[
(\mathcal{F}_u(v^*(\mu),\mu,0)v)_{n,m}=f_u(v^*_{n,m}(\mu),\mu)v_{n,m},
\]
where $f_u(v^*_{n,m}(\mu),\mu)\neq0$ due to $v^*_{n,m}(\mu)\in\{0,u_\pm(\mu)\}$ and Hypothesis~\ref{h1}. The persistence and uniqueness statements now follow from the implicit function theorem. The statement about the spectrum follows since $f_u(u_-(\mu),\mu)>0$ whilst $f_u(u,\mu)<0$ for $u=0,u_+(\mu)$.
\end{proof}

In particular, we can apply this lemma to prove that the set $\Gamma(N_*)\cap\left(\ell^\infty(\mathbb{Z}^2)\times(\delta_*,1-\delta_*)\right)$ can be continued uniquely and smoothly into $0<|d|<d_*$ for an appropriate $d_*$ that depends on $\delta_*$ and $N_*$. The lemma also proves the assertion about nonlinear stability made in Theorem~\ref{t1}. It therefore suffices to show how the resulting branches can be continued through $\mu=0$ and $\mu=1$ and to discuss the transverse crossing of eigenvalues near these parameter values.


\subsection{$D_4$-symmetric patterns}\label{s3.2}

We are interested in $D_4$-symmetric off-site and on-site patterns. Off-site patterns are generated by the reflections across the horizontal, vertical, and diagonal lines on the lattice $\mathbb{Z}^2$ that pass through the point $(1/2,1/2)$ or, in other words, by the reflections $(n,m)\mapsto(1-n,m)$ and $(n,m)\mapsto(m,n)$ on the lattice $\mathbb{Z}^2$. Similarly, on-site patterns are generated by the reflections across the horizontal, vertical, and diagonal lines on $\mathbb{Z}^2$ that pass through $(1,1)$, that is by the reflections $(n,m)\mapsto(2-n,m)$ and $(n,m)\mapsto(m,n)$ on $\mathbb{Z}^2$. In particular, there is a 1:1 correspondence between $D_4$-symmetric patterns and functions $u_{n,m}$ defined for indices $(n,m)$ in the index set
\begin{equation}\label{e:I}
I = \{(n,m)\in\mathbb{Z}^2:\ 1\leq m\leq n\}
\end{equation}
by extending the latter to indices in $\mathbb{Z}^2$ using the appropriate $D_4$-action defined above. In the remainder of the proof of Theorem~\ref{t1}, we will make extensive use of this correspondence.


\subsection{Continuation through bifurcations near $\mu=0$}\label{s3.3}

We consider the behavior of $D_4$-symmetric patterns near $\mu=0$. Lemma~\ref{lem:Pitchfork} provides results for $\bar{u}^{(N,M)}$ with $1\leq M\leq N-1$, while Lemma~\ref{lem:Pitchfork2} covers the case of $\bar{u}^{(N,N)}$ with $N=1,2$. We recall that we cannot prove continuation through $\mu=0$ for $\bar{u}^{(N,N)}$ when $N\geq3$. Using a change of coordinates, we can bring the Taylor expansion of $f(u,\mu)$ about $(0,0)$ into the form
\begin{equation}\label{NormalForm1}
f(u,\mu) = -\mu u + u^3 + \mathcal{O}(\mu^2 u + \mu u^3 + u^5)
\end{equation}
and may also assume that $u_+(0)=1$.

\begin{figure}
\centering
\includegraphics{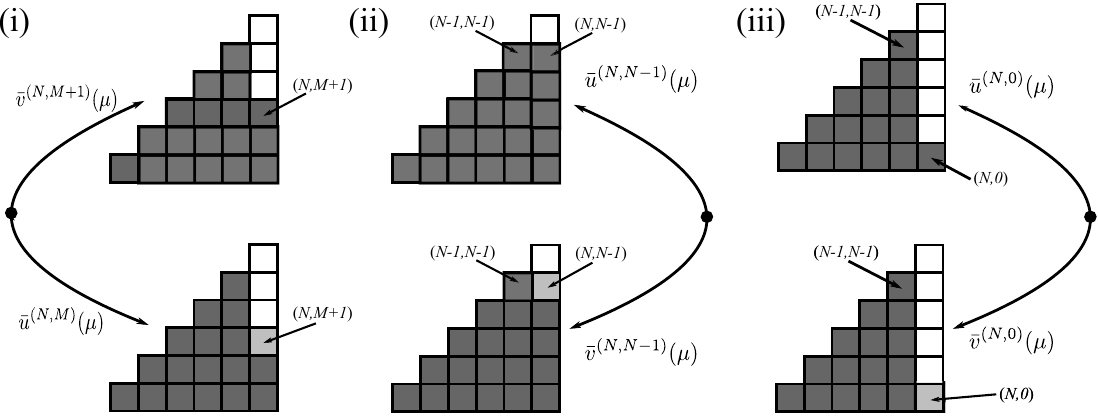}
\caption{Panel~(i) contains an illustration of the proof of Lemma~\ref{lem:Pitchfork}. Panel~(ii) shows a cartoon of the fold bifurcation near $\mu=1$ that connects $\bar{v}^{(N,M)}(\mu)$ and $\bar{u}^{(N,M+1)}(\mu)$ for the case $M=N-1$: the discussion surrounding (\ref{Saddle3}) shows that the element at $(n,m)=(N-1,N-1)$ is the obstacle to extending the results in Lemma~\ref{lem:Saddle} to $1\leq M\leq N-2$. Panel~(iii) illustrates the proof of Lemma~\ref{lem:Saddle2}.}
\label{f:11}
\end{figure}

\begin{lem}\label{lem:Pitchfork}
Fix $N \geq 2$ and $1 \leq M \leq N-1$, then the following is true separately for on-site and off-site $D_4$-symmetric solutions of \revised{the steady-state system} (\ref{LDS}). There are constants $d_1,\mu_1 > 0$ and a smooth function $\mu_l:[0,d_l] \to [0,\mu_1]$ such that the following is true for each $d \in (0,d_1]$:
\begin{compactitem}
\item Fold bifurcations: There is a pair of symmetric solutions $u_l(\mu,d)$ and $v_l(\mu,d)$ of \revised{the steady-state system} (\ref{LDS}) that bifurcate at a fold bifurcation at $\mu = \mu_l(d)$ and exist for all $\mu \in [\mu_l(d),\mu_1]$. These solutions are smooth in $(\mu,d)$, and for each fixed $\mu$ we have $u_l(\mu,d) \to \bar{u}^{(N,M)}(\mu)$ and $v_l(\mu,d) \to \bar{v}^{(N,M+1)}(\mu)$ as $d\searrow0$.
\item Expansion: The function $\mu_l(d)$ satisfies $\mu_l(d) = 3d^\frac{2}{3} + \mathcal{O}(d)$.
\item Stability: The linearization of \revised{the lattice dynamical system} (\ref{LDST}) about these solutions posed on $\ell^\infty(\mathbb{Z}^2)$ has precisely eight eigenvalues when $M\neq N-1$ and four eigenvalues when $M=N-1$ that cross the origin as the branch is traversed near each fold bifurcation, and these eigenvalues cross transversely.
\end{compactitem}
\end{lem}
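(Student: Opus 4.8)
The plan is to exploit the $D_4$-symmetry to reduce the problem to the wedge $I=\{(n,m):1\le m\le n\}$ of \S\ref{s3.2}, perform a Lyapunov--Schmidt reduction that eliminates every lattice site except one distinguished $D_4$-orbit, and analyse the resulting scalar equation near $(d,\mu)=(0,0)$ by a blow-up rescaling. The key geometric fact is that $\bar u^{(N,M)}(0)=\bar v^{(N,M+1)}(0)$ and that, for $0<\mu<1$, these two profiles differ on $I$ only at the single \emph{critical site} $p:=(N,M+1)$, where $\bar u^{(N,M)}$ equals $0$ and $\bar v^{(N,M+1)}$ equals $u_-(\mu)$. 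A short combinatorial check, illustrated in Figure~\ref{f:11}(i), shows that $p$ has exactly two neighbours on which $\bar u^{(N,M)}(0)$ equals $u_+(0)=1$ --- the site directly below $p$ in column $N$ together with either its left neighbour (if $M\le N-2$) or its image under $(n,m)\mapsto(m,n)$ (if $M=N-1$) --- while the other two neighbours of $p$ carry the value $0$; moreover the $D_4$-orbit of $p$ consists of $8$ points when $M\ne N-1$, since then $p$ lies off every line of symmetry, and of $4$ points when $M=N-1$, since then $p$ lies on the diagonal.

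To make the reduction uniform all the way down to the scale of the fold, I would rescale before reducing: set $\mu=\varepsilon^{2}$, $d=\varepsilon^{3}D$, and write a $D_4$-symmetric solution as $u=\bar u^{(N,M)}(\mu)+w$ on $I$ with $w_p=\varepsilon W$, with $w_q=\varepsilon V_q$ at the remaining sites where $\bar u^{(N,M)}(0)=0$, and with $w_q=\varepsilon^{3}Z_q$ at the sites where $\bar u^{(N,M)}(0)=u_+(0)$. Dividing each lattice equation by $\varepsilon^{3}$ and inserting the normal form~(\ref{NormalForm1}) and $u_+(0)=1$ produces a map that is smooth in $(W,\{V_q\},\{Z_q\},D,\varepsilon)$ and that, at $\varepsilon=0$, decouples: the equations for the $\{Z_q\}$ are linear with invertible coefficient $f_u(u_+(0),0)<0$, each $V_q$-equation with $q\ne p$ reads $V_q^{3}-V_q+Dk_q=0$ where $k_q\in\{0,1\}$ is the number of ``on'' neighbours of $q$, and the $p$-equation is $W^{3}-W+2D=0$. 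For $D$ in a compact interval that contains $D_*:=\tfrac{1}{3\sqrt3}$ in its interior but stays below the common fold value $\tfrac{2}{3\sqrt3}$ of the sites with $k_q=1$, the non-critical equations at $\varepsilon=0$ have, along the branch continued from $d=0$, unique solutions at which the linearization is invertible; the implicit function theorem then solves out $\{V_q\}_{q\ne p}$ and $\{Z_q\}$ for small $\varepsilon$ and leaves the scalar reduced equation $\widehat{G}(W,D,\varepsilon)=W^{3}-W+2D+\varepsilon R(W,D,\varepsilon)=0$ with $R$ smooth.

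At $\varepsilon=0$ the cubic $W^{3}-W+2D$ has a nondegenerate quadratic fold at $(W_*,D_*)=(3^{-1/2},\tfrac{1}{3\sqrt3})$, since $3W_*^{2}-1=0$, $6W_*\ne0$, $\partial_D=2\ne0$, so the implicit function theorem gives, for small $\varepsilon$, a smooth fold locus $D=D_f(\varepsilon)=D_*+O(\varepsilon)$ and two branches $W_\mp(D,\varepsilon)$ with $W_-<W_*<W_+$ that exist precisely for $D\le D_f(\varepsilon)$ and coalesce there (a third branch with $W\approx-1$ persists for all $D$ but corresponds to a $D_4$-symmetric pattern that is not nonnegative and plays no role). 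Undoing the rescaling via $\varepsilon=\sqrt\mu$, $D=d\mu^{-3/2}$ turns the fold locus into $d=\mu^{3/2}D_f(\sqrt\mu)=D_*\mu^{3/2}+O(\mu^{2})$, which inverts --- using $D_*^{-2/3}=3$ --- to $\mu_l(d)=3d^{2/3}+O(d)$, smooth for $d>0$ and continuous at $d=0$; the solutions $u_l,v_l$ assembled from $W_\mp$ and the solved-out variables then exist for $\mu\in[\mu_l(d),\mu_1]$, are smooth in $(\mu,d)$ away from the fold, depend smoothly on $d$, and satisfy $u_l(\mu,d)\to\bar u^{(N,M)}(\mu)$ and $v_l(\mu,d)\to\bar v^{(N,M+1)}(\mu)$ as $d\searrow0$, because then $D\to0$, $W_-\to0$, $W_+\to u_-(\mu)/\sqrt\mu$, and the non-critical corrections vanish.

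For the spectral statement, the linearization $\mathcal{F}_u$ about $u_l$ or $v_l$ has diagonal entries $f_u(u_q,\mu)$ and the coupling $d\Delta$ of norm $O(\varepsilon^{3})$. These entries are a negative constant plus $O(\varepsilon^{2})$ on the ``on'' sites, equal $\varepsilon^{2}(3V_q^{2}-1)+O(\varepsilon^{4})\le -c\varepsilon^{2}<0$ on the non-critical ``off'' sites over the chosen $D$-range, and equal $\varepsilon^{2}(3W^{2}-1)+O(\varepsilon^{3})$ on the $k$-element orbit of $p$ --- negative on $u_l$ (where $W=W_-<W_*$), positive on $v_l$ (where $W=W_+>W_*$), and vanishing near the fold. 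Thus exactly these $k$ eigenvalues --- four when $M=N-1$ and eight when $M\ne N-1$ --- lie near the origin, the rest staying bounded away from the imaginary axis; parametrizing the branch by arclength $s$ with $s=0$ at the fold makes the fold value of $\mu$ a minimum, so $\mu'(0)=0$ while the critical-site value $w_p$ has $w_p'(0)\ne0$, whence the corresponding eigenvalue $\lambda(s)\approx-\mu(s)+3w_p(s)^{2}$ satisfies $\lambda'(0)=6w_p(0)w_p'(0)+o(1)\ne0$ and all $k$ eigenvalues cross transversely. The on-site case is identical, since $p$ and its neighbours lie in $I$ or are related to sites of $I$ only through the reflection $(n,m)\mapsto(m,n)$ shared by both $D_4$-representations. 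The step I expect to be the main obstacle is making the Lyapunov--Schmidt reduction uniform at the fold scale $\mu\sim d^{2/3}$, precisely where the degeneracy of the subcritical pitchfork ($f_u(0,\mu)=-\mu\to0$) competes with the coupling ($\|d\Delta\|\sim d$); the blow-up $\mu=\varepsilon^{2}$, $d=\varepsilon^{3}D$ is what converts the singular limit $(d,\mu)\to(0,0)$ into a regular perturbation problem in $\varepsilon$ in which the critical site is the only remaining source of degeneracy.
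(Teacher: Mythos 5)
Your proposal is correct and follows essentially the same route as the paper: the same blow-up $\mu=\varepsilon^2$, $d=\varepsilon^3 D$ with the critical cell $(N,M+1)$ scaled like $\varepsilon$, the same reduced cubic $W^3-W+2D$ whose fold at $D_*=\tfrac{1}{3\sqrt3}$ yields $\mu_l(d)=3d^{2/3}+\mathcal{O}(d)$, the same factor-of-two argument keeping the non-critical cells away from their folds, and the same $D_4$-orbit count (eight versus four) with a spectral-projection argument for the transverse crossing. The only differences are cosmetic: you scale all ``off'' cells uniformly by $\varepsilon$ and solve them out before analyzing a scalar fold equation, whereas the paper scales distant cells by $\nu^{\rho_{n,m}}$ and continues the full branch at once via the implicit function theorem.
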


\begin{proof}
We will fix $(N,M)$ and construct $D_4$-symmetric solutions of \revised{the steady-state system} (\ref{LDS}) near the pattern
\[
\bar{u}^{(N,M)}(0)=\bar{v}^{(N,M)}(0) = \left\{\begin{array}{ll}
1 & 1\leq m\leq n<N \\
1 & n=N,\; 1\leq m\leq M, \\
0 & \mathrm{otherwise}
\end{array}\right.
\]
for $(\mu,d)$ near zero. We use $D_4$-symmetry to reduce patterns to the index set $I$ defined in (\ref{e:I}): our proof will apply to both on-site and off-site solutions, and we therefore do not distinguish these cases in what follows. As illustrated in Figures~\ref{f:5} and~\ref{f:11}, we expect that, as the branch passes near $\mu=0$, the cell $u_{N,M+1}$ changes from 0 to $u_-(\mu)$, while the remaining cells will stay near $0$ or $u_+(\mu)$. To capture this behavior, we define the pairwise disjoint index sets
\begin{equation}\label{e:i}
I_+ = \{ (n,m)\in I: \bar{u}^{N,M}_{n,m}=1 \},\quad
I_- = \{ (N,M+1) \}, \quad
I_0 = \{ (n,m)\in I\setminus I_-: \bar{u}^{N,M}_{n,m}=0 \}.
\end{equation}
To solve $\mathcal{F}(u,\mu,d)=0$, we note that $\mathcal{F}(\bar{u}^{(N,M)}(0),0,0)=0$ and that the linearization of $\mathcal{F}$ is given by
\[
(\mathcal{F}_u(\bar{u}^{(N,M)}(0),0,0)v)_{n,m} = \left\{ \begin{array}{cl}
f_u(1,0) v_{n,m} & (n,m)\in I_+ \\
0                & (n,m)\in I\setminus I_+.
\end{array}\right.
\]
Writing $u^+:=u|_{I_+}$ and $u^c:=u|_{I\setminus I_+}$, and using that $f_u(1,0)\neq0$, we can apply the implicit function theorem to conclude that $\mathcal{F}(u,\mu,d)=0$ restricted to the index set $I_+$ has a unique solution $u^+(u^c,\mu,d)\in\ell^\infty(I_+)$ for each $u^c\in\ell^\infty(I\setminus I_+)$ and $(\mu,d)$ near zero, and this solution depends smoothly on its arguments. In particular, we have 
\begin{equation}\label{PitchSol}
u^+(u^c,\mu,d) = 1 + \mathcal{O}(|\mu|+|d| \|u^c\|_\infty).
\end{equation}
To solve \revised{the steady-state system} (\ref{LDS}) on the index set $I\setminus I_+$, we introduce the scaling
\begin{equation}\label{e:scaling}
\mu = \nu^2, \qquad
d = \nu^3 \tilde{d}, \qquad
u_{n,m} = \nu^{\rho_{n,m}} \tilde{u}_{n,m}, \qquad
\rho_{n,m}:=\inf_{(\tilde{n},\tilde{m})\in I_+}(|n-\tilde{n}|+|m-\tilde{m}|)
\end{equation}
for $(n,m)\in I\setminus I_+$ with $|\nu|\ll1$, where $\rho_{n,m}$ is the $\ell^1$ distance of $(n,m)$ from $I_+$. Substituting these expressions into \revised{the steady-state system} (\ref{LDS}), we see that (\ref{LDS}) at index $(n,m)=(N,M+1)$ becomes
\[
0 = \nu^3\tilde{d}u^+_{N-1,M+1} + \nu^3\tilde{d}u^+_{N,M} + \nu^5\tilde{d}u^+_{N+1,M+1} - 4\nu^3\tilde{d}\tilde{u}_{N,M+1} + f(\nu\tilde{u}_{N,M+1},\nu^2) + \left\{
\begin{array}{ll}
\nu^4\tilde{d}\tilde{u}_{N,M+2} & M < N-1 \\
\nu^5\tilde{d}\tilde{u}_{N,M+2} & M = N-1,
\end{array}\right.
\]
where $u^+=u^+((\nu^{\rho_{n,m}}\tilde{u}_{n,m})_{(n,m)\in I\setminus I_+},\nu^2,\nu^3\tilde{d})$. Using (\ref{NormalForm1}) and (\ref{PitchSol}), we find that \revised{the steady-state system} (\ref{LDS}) at index $(n,m)=(N,M+1)$ is given by
\[
0 = \nu^3(2\tilde{d} - \tilde{u}_{N,M+1} + \tilde{u}^3_{N,M+1}) + \mathcal{O}(\nu^4) \qquad\mbox{or}\qquad
0 = 2\tilde{d} - \tilde{u}_{N,M+1} + \tilde{u}^3_{N,M+1} + \mathcal{O}(\nu).
\]
Proceeding in the same fashion for each $(n,m)\in I\setminus I_+$, we see that \revised{the steady-state system} (\ref{LDS}) restricted to the index set $I\setminus I_+$ becomes
\[
\begin{array}{lcl}
n = N, m = M+1:					&& 0 = \nu^3(2\tilde{d} - \tilde{u}_{N,M+1} + \tilde{u}^3_{N,M+1}) + \mathcal{O}(\nu^4) \\
n = N, m = N:						&& 0 = \nu^3(-\tilde{u}_{n,m} + \tilde{u}^3_{n,m}) + \mathcal{O}(\nu^4) \\
n = N, M+1 < m < N:			&& 0 = \nu^3(\tilde{d} - \tilde{u}_{n,m} + \tilde{u}^3_{n,m}) + \mathcal{O}(\nu^4) \\
n = N+1, 1 \leq m < M:	&& 0 = \nu^3(u_+(0)\tilde{d} - \tilde{u}_{n,m} + \tilde{u}^3_{n,m}) + \mathcal{O}(\nu^4) \\
n = N+1,m = M: 					&& 0 = \nu^3(\tilde{d} - \tilde{u}_{n,m} + \tilde{u}^3_{n,m}) + \mathcal{O}(\nu^4) \\
n = N+1,m \geq M+1: 		&& 0 = \nu^4(\tilde{d}\tilde{u}_{n-1,m} - \tilde{u}_{n,m}) + \mathcal{O}(\nu^5) \\
n > N+1,1 \leq m \leq N:&& 0 = \nu^{\rho_{n,m}}(\tilde{d}\tilde{u}_{n-1,m} - \tilde{u}_{n,m}) + \mathcal{O}(\nu^{{\rho_{n,m} + 1}}) \\
n > N+1,m \geq N+1: 		&& 0 = \nu^{\rho_{n,m}}(\tilde{d}\tilde{u}_{n-1,m} + d\tilde{u}_{n,m-1} - \tilde{u}_{n,m}) + \mathcal{O}(\nu^{{\rho_{n,m} + 1}}). \\
\end{array}
\]
Upon dividing by the leading factors in $\nu$, we arrive at the system
\begin{equation}\label{PitchSol2.5}
\begin{array}{lclcl}
(1) && n = N, m = M+1:					&& 0 = 2\tilde{d} - \tilde{u}_{N,M+1} + \tilde{u}^3_{N,M+1} + \mathcal{O}(\nu) \\
(2) && n = N, m = N:						&& 0 = -\tilde{u}_{n,m} + \tilde{u}^3_{n,m} + \mathcal{O}(\nu) \\
(3) && n = N, M+1 < m < N:			&& 0 = \tilde{d} - \tilde{u}_{n,m} + \tilde{u}^3_{n,m} + \mathcal{O}(\nu) \\
(4) && n = N+1, 1 \leq m < M:		&& 0 = \tilde{d} - \tilde{u}_{n,m} + \tilde{u}^3_{n,m} + \mathcal{O}(\nu) \\
(5) && n = N+1,m = M:						&& 0 = \tilde{d} - \tilde{u}_{n,m} + \tilde{u}^3_{n,m} + \mathcal{O}(\nu) \\
(6) && n = N+1,m \geq M+1:			&& 0 = \tilde{d}\tilde{u}_{n-1,m} - \tilde{u}_{n,m} + \mathcal{O}(\nu) \\
(7) && n > N+1,1 \leq m \leq N:	&& 0 = \tilde{d}\tilde{u}_{n-1,m} - \tilde{u}_{n,m} + \mathcal{O}(\nu) \\
(8) && n > N+1,m \geq N+1:			&& 0 = \tilde{d}\tilde{u}_{n-1,m} + d\tilde{u}_{n,m-1} - \tilde{u}_{n,m} + \mathcal{O}(\nu). \\
\end{array}
\end{equation}
Recall that we are interested in a solution branch that connects $u_{N,M+1}=0$ to $u_{N,M+1}=u_-(\mu)$ for $0<d\ll1$, while all other cells $u_{n,m}$ with $(n,m)\in I_0$ remain close to zero. In the scaling we introduced above, it suffices to find a branch that connects $(\tilde{u}_{N,M+1},\tilde{d})=(0,0)$ to $(\tilde{u}_{N,M+1},\tilde{d})=(1,0)$, while the remaining cells $\tilde{u}_{n,m}$ with $(n,m)\in I_0$ are zero when $\tilde{d}$ vanishes. To find this branch, we first set $\nu=0$. In this case, (\ref{PitchSol2.5})(1) has the solution
\begin{equation}\label{e:s1}
(\tilde{u}_{N,M+1},\tilde{d})(s) := \left(s,\frac{s}{2}(1-s^2)\right), \qquad 0\leq s\leq1,
\end{equation}
which connects, as desired, $(0,0)$ to $(1,0)$ as $s$ varies from 0 to 1. Note that the branch (\ref{e:s1}) exhibits a generic fold bifurcation at $s=\frac{1}{\sqrt{3}}$ where $\tilde{d}_\mathrm{sn}:=\frac{1}{3\sqrt{3}}$. When $\tilde{d}=0$, the remaining equations in (\ref{PitchSol2.5}) admit the solution $\tilde{u}_{n,m}=0$ for indices in $I_0$, and it remains to continue this solution for $\tilde{d}\in[0,\tilde{d}_\mathrm{sn}]$. Clearly, $\tilde{u}_{n,m}=0$ is a solution of (\ref{PitchSol2.5})(2) for all $\tilde{d}$. Next, we see that there is a regular smooth solution branch $\tilde{u}_{n,m}(s)$ of (\ref{PitchSol2.5})(3-5) with $\tilde{d}=\tilde{d}(s)$ given by (\ref{e:s1}) that starts at $\tilde{u}_{n,m}=0$ at $s=0$: the end point of the solution branch at $s=1$ is again $\tilde{u}_{n,m}=0$: indeed, this branch never encounters the fold point present in (\ref{PitchSol2.5})(1), since the graph of the function $g(u,a)=a-u+u^3$ increases strictly in $a$, and $\tilde{d}$ has an additional factor two in (\ref{PitchSol2.5})(1). The remaining equations (\ref{PitchSol2.5})(6-8) are of the form
\begin{equation}\label{e:s2}
(-1+\tilde{d}B)\hat{u} = \tilde{d} h
\end{equation}
where $\hat{u}$ consists of the elements $\tilde{u}_{n,m}$ with indices $(n,m)$ listed in the middle columns of (\ref{PitchSol2.5})(6-8), $B$ is linear with $\|B\|\leq2$, and $h$ is a given vector that satisfies $|h|_\infty\leq2$, may depend on $s$, and comprises contributions from the remaining elements $\tilde{u}_{n,m}$. It is now easy to see that the operator on the left-hand side of (\ref{e:s2}) is invertible for all $\tilde{d}\in[0,\tilde{d}_\mathrm{sn}]$ since $2\tilde{d}_\mathrm{sn}<1$, and we conclude that (\ref{PitchSol2.5})(6-8) has a unique solution for all $s\in[0,1]$ that vanishes at $s=0$.

To prove persistence and uniqueness of this branch and the generic fold bifurcation for $0<\nu\ll1$, we write (\ref{PitchSol2.5}) as $G(\tilde{u}^c,\tilde{d},\nu)$ with $\tilde{u}^c\in\ell^\infty(I\setminus I_+)$ and note that the branch $(\tilde{u}^c,\tilde{d})(s)$ constructed above satisfies $G(\tilde{u}^c(s),\tilde{d}(s),0)=0$ and that the derivative $G_{(\tilde{u}^c,\tilde{d})}(\tilde{u}^c(s),\tilde{d}(s),0)$ has full rank for all $s\in[0,1]$. We can therefore apply the implicit function theorem and use persistence results for fold bifurcations to conclude that the branch persists for each $0<\nu<\nu_0$ and that it exhibits a unique generic fold bifurcation at $\tilde{d}=\tilde{d}_\mathrm{sn}(\nu)$ for some smooth function $\tilde{d}_\mathrm{sn}(\nu)$ with $\tilde{d}_\mathrm{sn}(0)=\frac{1}{3\sqrt{3}}$.

The preceding arguments establish the first two statements of the lemma, and it remains to prove the assertion about the eigenvalues of the linearization. We denote the solutions we constructed above by $u^*(s,\nu)=(u_{n,m}^*(s,\nu))_{n,m}$ and extend these solutions and the index sets $I_{\pm,0}$ we defined in (\ref{e:i}) using the underlying $D_4$-symmetry to indices $(n,m)$ in $\mathbb{Z}^2$ rather than just in the wedge $I$. Using the scaling introduced in (\ref{e:scaling}), the linearization of \revised{the lattice dynamical system} (\ref{LDST}) about $u^*(s,\nu)$ is given by
\[
(L(s,\nu) v)_{n,m} := \nu^3 \tilde{d}(s,\nu) (\Delta v)_{n,m} + f^\prime(u^*_{n,m}(s,\nu),\nu^2) v_{n,m}, \quad v\in\ell^\infty(\mathbb{Z}^2).
\]
Setting $\nu=0$, we see that
\[
L(s,0) = f^\prime(u^*_{n,m}(s,0),0) = f^\prime(\bar{u}^{N,M}_{n,m},0) =
\left\{\begin{array}{lcl}
f^\prime(1,0) < 0 && (n,m)\in I_+ \\
0                 && (n,m) \in I_0 \cup I_-.
\end{array}\right.
\]
We see that $L(s,0)$ has spectrum at $f^\prime(1,0)<0$ and at $0$ and that the spectral eigenspace associated with the spectrum at the origin is given by
\[
E^c := \left\{ v\in\ell^\infty(\mathbb{Z}^2):\; u_{n,m}=0 \mbox{ for } (n,m)\in I_+ \right\}.
\]
Since the bounded operator $L(s,\nu)$ depends smoothly on $(s,\nu)$, we can use the $(s,\nu)$-dependent spectral projections to represent the operator $L(s,\nu)$ on the $(s,\nu)$-dependent spectral space corresponding to the spectrum near the origin by a reduced operator posed on $E^c$. Denoting this operator by $L^c(s,\nu)$, we find that it is of the form
\[
L^c(s,\nu) = \nu^2\tilde{L}^c(s,\nu), \qquad
(\tilde{L}^c(s,\nu) v)_{n,m} = \left( -1+3(\tilde{u}^*_{n,m}(s,\nu))^2 \right) v_{n,m} + \mathcal{O}(\nu) v, \qquad v\in E^c.
\]
The bounded operator $\tilde{L}^c(s,\nu)$ on $E^c$ depends smoothly on $(s,\nu)$, and we have
\[
(\tilde{L}^c(s,0) v)_{n,m} = \left( -1+3(\tilde{u}^*_{n,m}(s,0))^2 \right) v_{n,m} =
\left\{\begin{array}{lcl}
(-1+3s^2) v_{n,m} && (n,m)\in I_- \\
\left( -1+3(\tilde{u}^*_{n,m}(s,0))^2 \right) v_{n,m} =: g_{n,m}(s) v_{n,m} && (n,m)\in I_0
\end{array}\right.
\]
for $0\leq s\leq1$. Our existence proof implies that there is a constant $C>0$ so that $g_{n,m}(s)\leq-2C<0$ for all $(n,m)\in I_0$ and $s\in[0,1]$. In particular, $\tilde{L}^c(s,\nu)$ has precisely $|I_-|$ eigenvalues that cross the imaginary axis transversely at the origin as $s$ passes through $1/\sqrt{3}$, while the remaining spectrum lies on or to the left of the line $\Re\lambda=-2C$. Thus, there is a $\nu_*>0$ so that $L^c(s,\nu)$ has, for each $\nu\in(0,\nu_*)$, precisely $|I_-|$ eigenvalues that cross the imaginary axis transversely at the origin near $s=1/\sqrt{3}$ (we remark that the representation of $D_4$ on the $|I_-|$-dimensional eigenspace enforces that eigenvalues cross at the origin and not just near it), while the remaining spectrum of $L^c(s,\nu)$ on $E^c$ lies to the left of the line $\Re\lambda=-C\nu^2$ for $s\in[0,1]$ and $\nu\in(0,\nu_*)$. Finally, Figure~\ref{f:6} shows that $|I_-|$ is equal to eight when $M\neq N-1$ and equal to four when $M=N-1$. This completes the proof of the lemma.
\end{proof}

When $M=N$, the cell at index $(N+1,1)$ will change from $0$ to $u_-(\mu)$ as we pass near $\mu=0$. In this case, we can proceed as above up until (\ref{PitchSol2.5}): this equation remains the same except that the equation at index $(n,m)=(N+1,1)$ becomes
\[
0 = \tilde{d} - \tilde{u}_{N+1,1} + \tilde{u}^3_{N+1,1} + \mathcal{O}(\nu),
\]
since $u_{N+1,1}$ only has one neighbor belonging to $I_1$. Thus, the equation for the critical cell at $(n,m)=(N+1,1)$ is exactly the same as those for the noncritical cells, and without going to higher-order expansions it is not clear what the solution structure is. We will now show that the results of Lemma~\ref{lem:Pitchfork} can be extended to the case $N=M$ for $N=1,2$.

\begin{lem}\label{lem:Pitchfork2}
Fix $N = 1,2$, then the following is true separately for on-site and off-site $D_4$-symmetric solutions of \revised{the steady-state system} (\ref{LDS}). There are constants $d_1,\mu_1 > 0$ and a smooth function $\mu_l:[0,d_l] \to [0,\mu_1]$ such that the following is true for each fixed $d \in (0,d_1]$:
\begin{compactitem}
\item Fold bifurcations: There is a pair of symmetric solutions $u_l(\mu,d)$ and $v_l(\mu,d)$ of \revised{the steady-state system} (\ref{LDS}) that bifurcate at a fold bifurcation at $\mu = \mu_l(d)$ and exist for all $\mu \in [\mu_l(d),\mu_1]$. These solutions are smooth in $(\mu,d)$, and for each fixed $\mu$ we have $u_l(\mu,d) \to \bar{u}^{(N,N)}(\mu)$ and $v_l(\mu,d) \to \bar{v}^{(N+1,1)}(\mu)$ as $d \searrow0$.
\item Expansion: The function $\mu_l(d)$ satisfies $\mu_l(d) = \frac{3}{\sqrt[3]{4}} d^\frac{2}{3} + \mathcal{O}(d)$.
\item Stability: The linearization of \revised{the lattice dynamical system} (\ref{LDST}) about these solutions posed on $\ell^\infty(\mathbb{Z}^2)$ has precisely eight eigenvalues that cross the origin as the branch is traversed near each fold bifurcation, and these eigenvalues cross transversely.
\end{compactitem}
\end{lem}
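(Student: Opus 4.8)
The plan is to repeat the argument of Lemma~\ref{lem:Pitchfork}, tracking the single cell $(N+1,1)$ — which changes from $0$ to $u_-(\mu)$ as the branch passes near $\mu=0$ — while dealing with the new feature that, for $\bar{u}^{(N,N)}$, this critical cell has only one neighbor, namely $(N,1)$, inside the block $I_+=\{(n,m)\in I:\bar{u}^{(N,N)}_{n,m}=1\}=\{1\le m\le n\le N\}$ rather than two. As in Lemma~\ref{lem:Pitchfork} I would reduce to the wedge $I$, set $I_-=\{(N+1,1)\}$ and $I_0=I\setminus(I_+\cup I_-)$, solve $\mathcal{F}=0$ on $I_+$ by the implicit function theorem to get $u^+=1+\mathcal{O}(|\mu|+|d|\|u^c\|_\infty)$, and introduce the scaling (\ref{e:scaling}). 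With only one $I_+$-neighbor the equation at the critical index becomes $0=\tilde{d}-\tilde{u}_{N+1,1}+\tilde{u}^3_{N+1,1}+\mathcal{O}(\nu)$, i.e.\ \emph{without} the factor of two present in (\ref{PitchSol2.5})(1). Setting $\nu=0$, this is solved by the branch $(\tilde{u}_{N+1,1},\tilde{d})(s)=(s,s(1-s^2))$, $s\in[0,1]$, which connects $(0,0)$ to $(1,0)$ and carries a generic fold at $s=1/\sqrt{3}$ with $\tilde{d}_\mathrm{sn}(0)=\tfrac{2}{3\sqrt{3}}$; translating back through $d=\nu^3\tilde{d}$, $\mu=\nu^2$ gives $\mu_l(d)=\tilde{d}_\mathrm{sn}(0)^{-2/3}d^{2/3}+\mathcal{O}(d)=\tfrac{3}{\sqrt[3]{4}}d^{2/3}+\mathcal{O}(d)$, as claimed.

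The genuinely new point is the following. In Lemma~\ref{lem:Pitchfork} the non-critical cells obeyed equations $0=c\tilde{d}-\tilde{u}+\tilde{u}^3+\mathcal{O}(\nu)$ with $c<2$, whose folds lie at $\tilde{d}=\tfrac{2}{3\sqrt{3}}$, strictly above the maximal value $\tfrac{1}{3\sqrt{3}}$ that $\tilde{d}$ attained along the critical branch there, so those cells stayed regular. Here $\tilde{d}$ reaches $\tfrac{2}{3\sqrt{3}}$ exactly, so any cell of $I_0$ at $\ell^1$-distance one from $I_+$ with a single $I_+$-neighbor carries the same leading-order equation $0=\tilde{d}-\tilde{u}+\tilde{u}^3+\mathcal{O}(\nu)$ and threatens to fold at the same place. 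For $N=1$ no such cell exists — every index of $I_0$ has $\ell^1$-distance at least two from $I_+$ — so after solving $I_+$ and the critical cell, the remaining cells satisfy linear equations $(-1+\tilde{d}B)\hat{u}=\tilde{d}h$ with $\|B\|\le2$, uniquely solvable for $\tilde{d}\in[0,\tfrac{2}{3\sqrt{3}}]$ since $\tfrac{4}{3\sqrt{3}}<1$; persistence of the branch and of the fold for $0<\nu\ll1$, the expansion, and the eigenvalue count then follow verbatim from Lemma~\ref{lem:Pitchfork}, with $|I_-|$ equal to the cardinality of the $D_4$-orbit of $(N+1,1)$.

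For $N=2$ there is exactly one such cell, $(3,2)$, with $I_+$-neighbor $(2,2)$ and only one further $\mathcal{O}(\nu)$-relevant neighbor, the critical cell $(3,1)$; the remaining cells of $I_0$ again lie at $\ell^1$-distance at least two and can be eliminated through the linear equations above. This reduces the problem to the coupled pair
\[
0=\tilde{d}-\tilde{u}_{3,i}+\tilde{u}_{3,i}^3+\nu\,\tilde{d}\,c^{(i)}(\tilde{u})+\mathcal{O}(\nu^2),\qquad i=1,2,
\]
where a direct computation gives $c^{(1)}-c^{(2)}$ evaluating to $\tfrac{1}{\sqrt{3}}>0$ at $\tilde{u}_{3,1}=\tilde{u}_{3,2}=1/\sqrt{3}$ (for both on-site and off-site patterns). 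Away from $\tilde{d}=\tfrac{2}{3\sqrt{3}}$ the implicit function theorem applies to this $2\times2$ system directly and produces the desired branch, with $\tilde{u}_{3,1}$ following the branch near $\tilde{u}=0$ and then the branch near $\tilde{u}=1$ of $\tilde{d}=\tilde{u}-\tilde{u}^3$, and $\tilde{u}_{3,2}$ staying on the branch near $0$. Near the fold I would blow up by writing $\tilde{u}_{3,i}=\tfrac{1}{\sqrt{3}}+\sqrt{\nu}\,\eta_i$ and $\tilde{d}=\tfrac{2}{3\sqrt{3}}+\nu\gamma$; the leading-order blown-up system $\gamma=-\beta_i-\sqrt{3}\,\eta_i^2$, with $\beta_1-\beta_2=\tfrac{2}{9}>0$ given by the sign computed above, eliminates to $\eta_2^2=\eta_1^2+\tfrac{2}{9\sqrt{3}}$, a smooth curve on which $\gamma$ has a nondegenerate maximum at $\eta_1=0$ while $|\eta_2|$ stays bounded away from $0$. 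Hence near the fold the solution set is a single smooth branch carrying a generic fold in $\tilde{u}_{3,1}$ with $\tilde{u}_{3,2}$ slaved and non-critical; this persists for $0<\nu\ll1$, matches the global branch, and the eigenvalue analysis of Lemma~\ref{lem:Pitchfork} then finishes the proof.

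The main obstacle is precisely the coincidence of folds at $N=2$: one must establish the sign $\beta_1>\beta_2$ — equivalently, that the critical cell reaches its fold at a strictly smaller value of $\tilde{d}$ than $(3,2)$ would — and then carry the blow-up through cleanly enough to read off both the generic fold and the transverse crossing of the $|I_-|$ eigenvalues. This is also the point that breaks for $N\ge3$: there the cell $(N+1,2)$ has two $\mathcal{O}(\nu)$-relevant non-$I_+$ neighbors, $(N+1,1)$ and $(N+1,3)$, which forces $\beta^{(N+1,2)}=\beta^{(N+1,1)}=-\tfrac49$, so that its fold coincides with the critical one at leading order and the resulting higher-codimension degeneracy cannot be resolved by this method (cf.\ Remark~\ref{r:1}).
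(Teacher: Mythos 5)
Your proposal is correct and follows essentially the same route as the paper's proof: the same reduction and scaling, the observation that the critical equation loses the factor $2$ so the fold sits at $\tilde{d}=\tfrac{2}{3\sqrt{3}}$ (giving the $\tfrac{3}{\sqrt[3]{4}}d^{2/3}$ expansion), the same handling of $N=1$ via the single leading-order cubic plus the linear tail system, and for $N=2$ the same next-order expansion and $\sqrt{\nu}$-blow-up showing that the critical cell folds at a strictly smaller $\tilde{d}$ than the $(3,2)$-cell, which therefore stays slaved (this is exactly the computation around (\ref{PitchSol4}), and your explanation of the $N\geq3$ degeneracy matches Remark~\ref{r:1}). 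The only point you state too casually is the $N=2$ eigenvalue count: because the $(3,2)$-orbit eigenvalues are only $\mathcal{O}(\nu^{5/2})$ from the origin, the argument of Lemma~\ref{lem:Pitchfork} does not apply verbatim — the paper adds a second spectral reduction and rescales eigenvalues by $\nu^{5/2}$, using precisely your fact that $\eta_2$ stays bounded away from zero with a definite sign, so this is a matter of elaboration rather than a gap.
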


\begin{proof}
First, we consider off-site solutions with $N=1$. Proceeding as in Lemma~\ref{lem:Pitchfork} we arrive at
\begin{equation}\label{PitchSol3}
\begin{array}{lclcl}
(1) && n = 2, m = 1:	&& 0 = \tilde{d} - \tilde{u}_{2,1} + \tilde{u}^3_{2,1} + \mathcal{O}(\nu) \\
(2) && n = 2, m = 2: && 0 = 2\tilde{d}\tilde{u}_{1,2} - \tilde{u}_{2,2} + \mathcal{O}(\nu) \\
(3) && n > 2,1 \leq m \leq 2: && 0 = \tilde{d}\tilde{u}_{n-1,m} - \tilde{u}_{n,m} + \mathcal{O}(\nu) \\
(4) && n > 2,m \geq 3: & &0 = \tilde{d}\tilde{u}_{n-1,m} + d\tilde{u}_{n,m-1} - \tilde{u}_{n,m} + \mathcal{O}(\nu),
\end{array}
\end{equation}
where we seek a solution branch that connects $\tilde{u}_{2,1}=0$ to $\tilde{u}_{2,1}=1$, while the other cells $\tilde{u}_{n,m}$ with $n>2$ or $(n,m)=(2,2)$ remain close to zero. The key is that there is only a single leading-order equation, namely (\ref{PitchSol3})(i) for the critical index $(2,1)$, and we can therefore proceed as in the proof of Lemma~\ref{lem:Pitchfork}; we omit the details.

Next, we consider off-site solutions with $N=2$ and seek a solution branch that connects $u_{3,1}=0$ to $u_{3,1}=u_+(\mu)$. We proceed as before but now encounter the situation that the equations for two cells, including the critical cell $u_{3,1}$, coincide to leading order. We therefore include the next-order terms in $\nu$ in these equation and, as we will explain below, arrive at the system
\begin{equation}\label{PitchSol4}
\begin{array}{lclcl}
(1) && n = 3, m = 1: && 0 = \tilde{d} - \tilde{u}_{3,1} + \tilde{u}^3_{3,1} + \nu\tilde{d} (\tilde{u}_{3,2}-3\tilde{u}_{3,1}) + \mathcal{O}(\nu^\frac{3}{2}) \\
(2) && n = 3, m = 2: && 0 = \tilde{d} - \tilde{u}_{3,2} + \tilde{u}^3_{3,2} + \nu\tilde{d}(\tilde{u}_{3,1}-4\tilde{u}_{3,2}) + \mathcal{O}(\nu^\frac{3}{2}) \\
(3) && n = 3, m = 3: && 0 = \tilde{d}\tilde{u}_{3,2} - \tilde{u}_{3,3} + \mathcal{O}(\nu) \\
(4) && n > 3,1 \leq m \leq 2:	&& 0 = \tilde{d}\tilde{u}_{n-1,m} - \tilde{u}_{n,m} + \mathcal{O}(\nu) \\
(5) && n > 3,m \geq 3: && 0 = \tilde{d}\tilde{u}_{n-1,m} + d\tilde{u}_{n,m-1} - \tilde{u}_{n,m} + \mathcal{O}(\nu).
\end{array}
\end{equation}
Note that equations (1) and (2) in (\ref{PitchSol4}) agree to lowest order in $\nu$ but differ at the next order. The differences arise from the discrete Laplace operator: the equation for $\tilde{u}_{3,1}$ contains the term $-3\tilde{u}_{3,1}$ at order $\nu$ since the off-site symmetry enforces $u_{3,0} = u_{3,1}$, thus eliminating one of the connections at the index $(n,m)=(3,1)$. In contrast, we have exactly four self-interactions in (\ref{PitchSol4})(2) at order $\nu$ since the element at index $(n,m)=(3,2)$ has no neighbors that have a symmetric restriction imposed them. We set
\[
\tilde{d} = \frac{2}{3\sqrt{3}}+ \nu d_0, \quad
\tilde{u}_{3,1} = \frac{1}{\sqrt{3}} + \nu^\frac{1}{2}v_1, \quad
\tilde{u}_{3,2} = \frac{1}{\sqrt{3}} +\nu^\frac{1}{2}v_2
\]
so that (\ref{PitchSol4})(1-2) become
\[
\begin{array}{lclcl}
(1) && n = 3, m = 1: && 0 = d_0 + \sqrt{3}v_1^2 - \frac{4}{27} + \mathcal{O}(\nu^\frac{1}{2}) \\
(2) && n = 3, m = 2: && 0 = d_0 + \sqrt{3}v_2^2 - \frac{2}{9} + \mathcal{O}(\nu^\frac{1}{2})
\end{array}
\]
after dividing out the leading-order factor in $\nu$, while (3)-(5) remain unchanged to leading order. Setting $\nu = 0$, we see that (\ref{PitchSol4})(1) has the solution
\[
(v_1,d_0)(s) = \left(s,\frac{4}{27} - \sqrt{3}s^2\right)
\]
that connects $(v_1,d_0)=(-\frac{2\sqrt[4]{3}}{9},0)$ to $(v_1,d_0)=(\frac{2\sqrt[4]{3}}{9},0)$ and exhibits a generic fold bifurcation at $s=0$ where $d_0=\frac{4}{27}$. Note that the solution of (\ref{PitchSol4})(2) stays on the left solution branch as the fold bifurcation for this equation occurs for a larger value of $d_0$. We can now proceed as in the proof of Lemma~\ref{lem:Pitchfork} to complete the existence proof for off-site solutions. The stability proof is also very similar to proof of Lemma~\ref{lem:Pitchfork} except that we need to carry out two reduction steps using spectral projections: As before, the first step separates the $\mathcal{O}(\nu^2)$ spectrum from the spectrum at $f^\prime(1,0)$. The second step separates spectrum to the left of the line $\Re\lambda\leq-C\nu^2$ from the eigenvalues $\lambda\approx\nu^{\frac{5}{2}}$ that arise from (\ref{PitchSol4})(1-2). Scaling eigenvalues with $\nu^{\frac{5}{2}}$ on the $2|I_-|$-dimensional eigenspace obtained in the second step then shows that only $|I_-|$ of them cross transversely, while the others stay to the left of the imaginary axis.

It remains to consider on-site solutions. The proof for the $N=1$ on-site solution is identical to the proof above, while the only difference for the $N=2$ case is that the reduced equations that agree to leading order are now given by
\[
\begin{array}{lclcl}
(1) && n = 3, m = 1: && 0 = \tilde{d} - \tilde{u}_{3,1} + \tilde{u}^3_{3,1} + \nu(2\tilde{d}u_{3,2}-4\tilde{d}u_{3,1}) + \mathcal{O}(\nu^\frac{3}{2}) \\
(2) && n = 3, m = 2: && 0 = \tilde{d} - \tilde{u}_{3,2} + \tilde{u}^3_{3,2} + \nu(\tilde{d}u_{3,1}-4\tilde{d}u_{3,2}) + \mathcal{O}(\nu^\frac{3}{2}),
\end{array}
\]
where we used that on-site symmetry implies that $u_{3,0}=u_{3,2}$. The remainder of the proof is similar to the off-site case studied above.
\end{proof}

\begin{rmk}\label{r:1}
Our proof of Lemma~\ref{lem:Pitchfork2} indicates that the equations for the critical cell with index $(n,m)=(N+1,1)$ and the neighboring cell with index $(n,m)=(N+1,2)$ for $N\geq3$ agree up to terms of any order less than $\sum_{j=1}^N \frac{1}{2^j}$ in $\nu$. We were not able to find a consistent pattern that allowed us to write down the resulting equations and solve them simultaneously in $N\geq3$. 
\end{rmk}


\subsection{Continuation through bifurcations near $\mu=1$}\label{s3.4}

We continue branches through $\mu=1$: Lemma~\ref{lem:Saddle2} provides results for $\bar{u}^{(N,1)}$ with $N\geq3$, while Lemma~\ref{lem:Saddle} deals with the cases of $\bar{u}^{(N,N-1)}$ and $\bar{u}^{(N,N)}$ with $N\geq2$. Note that we cannot establish continuation through $\mu=1$ for $\bar{u}^{(N,M)}$ when $1\leq M\leq N-2$). Changing coordinates, we can bring the Taylor expansion of $f(u,\mu)$ about $(1,1)$ into the form
\[
f(1+u,1+\mu) = -\mu - u^2 + b_1 \mu u + b_2 u^3 + \mathcal{O}(\mu^2 + \mu u^2 + u^4).
\]
We now state our first result.

\begin{lem}\label{lem:Saddle}
Fix $N \geq 2$ and $M\in\{N-1,N\}$, then the following is true separately for on-site and off-site $D_4$-symmetric solutions of \revised{the steady-state system} (\ref{LDS}). There exist constants $d_2,\mu_2 > 0$ and a smooth function $\mu_r:[0,d_2] \to [\mu_2,1]$ such that the following is true for each fixed $d \in (0,d_2]$:
\begin{compactitem}
\item Fold bifurcation: There is a pair of symmetric solutions $u_r(\mu,d)$ and $v_r(\mu,d)$ of \revised{the steady-state system} (\ref{LDS}) that bifurcate at a fold bifurcation at $\mu = \mu_r(d)$ and exist for all $\mu \in [\mu_2,\mu_r(d)]$. These solutions are smooth in $(\mu,d)$, and for each fixed $\mu$ we have $u_r(\mu,d) \to \bar{u}^{(N,M)}(\mu)$ and $v_r(\mu,d) \to \bar{v}^{(N,M)}(\mu)$ as $d \searrow0$.
\item Expansion: The function $\mu_r(d)$ is given by $\mu_r(d)=1-2d+\mathcal{O}(d^\frac{3}{2})$.
\item Stability: The linearization of \revised{the lattice dynamical system} (\ref{LDST}) about these solutions posed on $\ell^\infty(\mathbb{Z}^2)$ has precisely eight eigenvalues when $M=N-1$ and four eigenvalues when $M=N$ that cross the origin as the branch is traversed, and these eigenvalues cross transversely.
\end{compactitem}
\end{lem}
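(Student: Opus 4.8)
The plan is to mirror the structure of the proof of Lemma~\ref{lem:Pitchfork}, but now expanding about the saddle-node point $(u,\mu)=(1,1)$ rather than about the pitchfork at $(0,0)$. Using the $D_4$-symmetry reduction from \S\ref{s3.2}, I work on the index set $I$ and split the indices into three sets: $I_+$ consisting of those cells where $\bar{u}^{(N,M)}$ equals $u_+$ and which do \emph{not} border the critical cell, the singleton critical set $I_- = \{(N,M)\}$ where the transition between $u_+$ and $u_-$ takes place, and the remaining "outer" set $I_0$ where the profile sits near $0$. As in Lemma~\ref{lem:Pitchfork}, the linearization of $f$ at the cells in $I_+$ and $I_0$ is invertible at $d=0$ (since $f_u(u_+(1),1)\ne0$ on $I_+$ after perturbing off $\mu=1$, and $f_u(0,1)\ne0$ on $I_0$), so the implicit function theorem eliminates all non-critical cells, expressing them smoothly in terms of the critical variable $u_{N,M}$, $\mu$, and $d$; the leading behaviour is $u^+ = 1 + \mathcal{O}(|\mu-1|+d)$ on $I_+$ and $u^0 = \mathcal{O}(d)$ on $I_0$.

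Next I introduce the saddle-node scaling adapted to the normal form $f(1+u,1+\mu)=-\mu-u^2+\dots$: write $\mu = 1 - \nu^2\tilde\mu$ (or equivalently track $1-\mu$ at order $d$), $d = \nu^2\tilde d$, and $u_{N,M}=1+\nu\tilde u$, with analogous $\nu^{\rho_{n,m}}$-scalings on the outer cells as in (\ref{e:scaling}). Substituting into (\ref{LDS}) at the critical index and using that the critical cell has exactly two neighbours lying in $I_+$ (both approximately equal to $1$), the equation at $(N,M)$ reduces, after dividing by the leading power of $\nu$, to a scalar fold equation of the schematic form $0 = 2\tilde d - (1-\mu)/\nu^2 - \tilde u^2 + \mathcal{O}(\nu)$; solving the $\nu=0$ problem gives a branch $(\tilde u, 1-\mu)(s)$ with a generic fold, and tracing through the algebra yields the expansion $\mu_r(d) = 1 - 2d + \mathcal{O}(d^{3/2})$. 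The remaining (non-critical) equations are, at $\nu=0$, either trivially solved by the constant values $1$ and $0$ or are of the linear contraction form $(-1+\tilde d B)\hat u = \tilde d h$ with $\|B\|$ bounded and $\tilde d$ small, hence uniquely solvable; the implicit function theorem plus standard persistence-of-fold results then give the full branch and its fold for $0<\nu\ll1$. The stability statement follows exactly as in Lemma~\ref{lem:Pitchfork}: a spectral projection separates the $\mathcal{O}(1)$ spectrum (coming from $I_+$, which contributes $f_u(1,1)$-type negative eigenvalues, and from $I_0$) from an $\mathcal{O}(\nu^2)$ critical cluster whose dimension equals $|I_-|$ after $D_4$-extension, and rescaling shows those $|I_-|$ eigenvalues cross the origin transversely at the fold, while everything else stays uniformly in the left half-plane. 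Counting as in Figure~\ref{f:6}, $|I_-|=8$ for $M=N-1$ and $|I_-|=4$ for $M=N$.

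The main obstacle—and the reason the lemma is restricted to $M\in\{N-1,N\}$—is exactly the same bookkeeping issue flagged in Panel~(ii) of Figure~\ref{f:11} and in the discussion around (\ref{Saddle3}): for $1\le M\le N-2$ the critical cell $(N,M)$ has a neighbour, namely $(N-1,M)$ or more precisely the corner-type interactions with cells like $(N-1,N-1)$, whose equation degenerates to the \emph{same} leading order as the critical equation, so a single scalar fold equation no longer captures the reduced dynamics and one would need a multi-cell leading-order system (as happened for $N=2$ in Lemma~\ref{lem:Pitchfork2}), with no evident pattern as $N$ grows. Thus the delicate point is verifying that for $M=N-1$ and $M=N$ the critical cell is the unique index whose equation sits at the lowest order in $\nu$—a careful count of how many $I_+$-neighbours the critical cell has and at what $\ell^1$-distance the next-order corrections enter—after which the rest of the argument is a routine adaptation of Lemma~\ref{lem:Pitchfork}.
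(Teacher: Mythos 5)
Your scaling at the critical cell and the resulting expansion $\mu_r(d)=1-2d+\mathcal{O}(d^{3/2})$ are consistent with the paper, but the reduction step that precedes it contains a genuine error. You eliminate the cells in $I_+$ by the implicit function theorem, citing ``$f_u(u_+(1),1)\neq 0$ on $I_+$ after perturbing off $\mu=1$''. But Hypothesis~\ref{h1}(iv) states precisely that $u_\pm(\mu)$ collide in a saddle-node at $\mu=1$, so $f_u(u_+(1),1)=f_u(1,1)=0$: the linearization restricted to $I_+$ is singular at the point $(\bar u^{(N,M)}(1),1,0)$ about which the continuation must be performed (the fold occurs at $\mu_r(d)\to1$ as $d\searrow0$), so the implicit function theorem cannot be applied there, and the claimed expansion $u^+=1+\mathcal{O}(|\mu-1|+d)$ is false --- the cells at $u_+$ deviate from $1$ by $\mathcal{O}(\sqrt{1-\mu})=\mathcal{O}(\nu)$, not $\mathcal{O}(\nu^2)$. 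The paper does the opposite: it applies the implicit function theorem only on $I_0$ (where $f_u(0,1)\neq0$), keeps \emph{all} cells of $I_+\cup I_-$ in the reduced problem, scales every one of them as $u_{n,m}=1+\nu\tilde u_{n,m}$ with $\mu=1-\nu^2$ and $d=\nu^2\tilde d$, and then reads off from (\ref{Saddle2}) that the critical cell obeys $0=-2\tilde d+1-\tilde u_{N,M}^2+\mathcal{O}(\nu)$ while the other boundary cells in column $N$ obey $0=-\tilde d+1-\tilde u^2+\mathcal{O}(\nu)$ and interior cells $0=1-\tilde u^2+\mathcal{O}(\nu)$. The step your reduction skips --- and which is the heart of the argument --- is verifying that these non-critical $I_+$ cells, which also sit on square-root-type branches, never reach their own folds along the way; this holds only because their equations carry the factor $\tilde d$ rather than $2\tilde d$, so their folds sit at $\tilde d=1$ while the critical fold occurs at $\tilde d_{\mathrm{sn}}=\tfrac12$.

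The same mistake propagates into your stability sketch: you assign the $I_+$ cells ``$f_u(1,1)$-type negative $\mathcal{O}(1)$ eigenvalues'', but $f_u(1,1)=0$; near $\mu=1$ those cells contribute strictly negative eigenvalues only of size $\mathcal{O}(\nu)$ (roughly $-2\nu\tilde u_{n,m}$), so the spectral splitting must separate three scales: the $\mathcal{O}(1)$ spectrum from $I_0$, the $\mathcal{O}(\nu)$ negative spectrum from the non-critical $I_+$ cells, and the $|I_-|$ critical eigenvalues that cross. Your diagnosis of why the lemma is restricted to $M\in\{N-1,N\}$ is right in spirit --- for $M\le N-2$ the corner cell $(N-1,N-1)$ acquires, via the diagonal reflection, two neighbors in $I_0$, so its scaled equation coincides with the critical one at leading order, cf.\ (\ref{Saddle3}) --- though the obstruction is not a neighbor of $(N,M)$ but a distant cell sharing the same leading-order equation.
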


\begin{proof}
Our proof will apply to both on-site and off-site solutions, and we therefore do not distinguish these case in what follows. We again use $D_4$-symmetry to reduce patterns to the index set $I$ defined in (\ref{e:I}). Figures~\ref{f:5} and \ref{f:11} indicate that, as the branch passes near $\mu = 1$, the cell $u_{N,M}$ changes from $u_-(\mu)$ to $u_+(\mu)$, while the remaining cells stay near $0$ or $u_+(\mu)$. We therefore define the index sets
\[
I_- =\{(N,M)\}, \quad
I_+ =\{(n,m)\in I\setminus I_-: \bar{u}_{n,m}^{N,M} = 1\}, \quad
I_0 =\{(n,m)\in I: \bar{u}_{n,m}^{N,M} = 0\}.
\]
We have that $\mathcal{F}(\bar{u}^{(N,M)}(1),1,0)=0$ and that the linearization of $\mathcal{F}$ about this solution is given by 
\[
(\mathcal{F}_u(\bar{u}^{(N,M)}(1),1,0)v)_{n,m} =
\left\{ \begin{array}{cl}
f_u(0,1) v_{n,m} & (n,m)\in I_0 \\
0                & (n,m)\in I\setminus I_0.
\end{array}\right.
\]
Writing $u^0:=u|_{I_0}$ and $u^c:=u|_{I\setminus I_0}$, and using that $f_u(0,1)\neq0$, we can apply the implicit function theorem to find that $\mathcal{F}(u,\mu,d) = 0$ restricted to the index set $I_0$ has a unique solution $u^0(u^c,\mu,d) \in \ell^\infty(I_0)$ for each $u^c \in \ell^\infty(I\setminus I_0)$ and $(\mu,d)$ near $(1,0)$. This solution depends smoothly on its arguments, and in particular, has the expansion
\begin{equation}\label{Saddle1}
u^0(u^c,\mu,d) = \mathcal{O}(|\mu - 1| + |d|\|u^c\|_\infty).
\end{equation}
To solve \revised{the steady-state system} (\ref{LDS}) on the index set $I\setminus I_0$, we introduce the scaling
\[
\mu = 1 -\nu^2, \quad d = \nu^2\tilde{d}, \quad u_{n,m} = 1 + \nu\tilde{u}_{n,m},
\]
where $(n,m)\in I\setminus I_0$ and $|\nu| \ll 1$. Expanding $\mathcal{F}(u,\mu,d)=0$ restricted to the index set $I\setminus I_0$ in powers of $\nu$ and dividing by the leading factor in $\nu$, we arrive at the finite system
\begin{equation}\label{Saddle2}
\begin{array}{lclcl}
(1) && n = N, m = M: && 0 = -2\tilde{d} + 1 - \tilde{u}_{N,M}^2 + \mathcal{O}(\nu) \\
(2) && n = N,1 \leq m < M: && 0 = -\tilde{d} + 1 - \tilde{u}_{N,m}^2 + \mathcal{O}(\nu) \\
(3) && n < N:	&& 0 = 1 - \tilde{u}_{n,m}^2 + \mathcal{O}(\nu),
\end{array}
\end{equation}
where we used (\ref{Saddle1}) to simplify the equations with $n=N$. We seek a solution branch that connects $u_{N,M} = u_-(\mu)$ to $u_{N,M} = u_+(\mu)$ for $0 < d \ll 1$, while all other cells $u_{n,m}$ with $(n,m) \in I_+$ remain close to $u_+(\mu)$. Using our scaling, this means finding solutions of (\ref{Saddle2}) that connect $(\tilde{u}_{N,M},\tilde{d})=(-1,0)$ to $(\tilde{u}_{N,M},\tilde{d})=(1,0)$, while the remaining cells $\tilde{u}_{n,m}$ are $1$ for $\tilde{d}=0$. For $\nu=0$, (\ref{Saddle2})(1) has the solution
\[
(\tilde{u}_{N,M},\tilde{d})(s) = \left(s,\frac{1}{2}(1-s^2)\right)
\quad\mbox{for}\quad -1 \leq s \leq 1,
\]  
which connects $(-1,0)$ at $s=-1$ to $(1,0)$ at $s=1$ and exhibits a generic fold bifurcation at $s=0$. We can now follow the proof of Lemma~\ref{lem:Pitchfork} to complete the proof.
\end{proof}

Note that this lemma does not cover the case of indices $(N,M)$ with $1\leq M<N-1$ for $N\geq3$. For off-site solutions (the case of on-site patterns is similar), the reason is again that the rescaled equations the indices $(N,M)$ and $(N-1,N-1)$ are identical. Indeed, following the same process as above, we obtain the system
\begin{equation}\label{Saddle3}
\begin{array}{lclcl}
(1) && n = N, m = M: && 0 = -2\tilde{d} +1 - \tilde{u}_{N,M}^2 + \mathcal{O}(\nu) \\
(2) && n = m = N-1:	&& 0 = -2\tilde{d} +1 -\tilde{u}_{N-1,N-1}^2 + \mathcal{O}(\nu) \\
(3) && n = N,1 \leq m < M: && 0 = -\tilde{d} + 1 -\tilde{u}_{N,m}^2 + \mathcal{O}(\nu) \\
(4) && n = N-1, M< m < N-1: && 0 = -\tilde{d} + 1 -\tilde{u}_{N-1,m}^2 + \mathcal{O}(\nu) \\
(5) && n = N-1, 1 \leq m \leq M	&& 0 = 1 -\tilde{u}_{N-1,m}^2 + \mathcal{O}(\nu) \\
(6) && n \leq N-2 && 0 = 1 -\tilde{u}_{n,m}^2 + \mathcal{O}(\nu),
\end{array}
\end{equation}
and we would again need to identify the higher-order corrections. The following lemma addresses this issue for $M=1$.

\begin{lem}\label{lem:Saddle2}
Fix $N\geq3$ and $M=1$, then the following is true separately for on-site and off-site $D_4$-symmetric solutions of \revised{the steady-state system} (\ref{LDS}). There are constants $d_2,\mu_2 > 0$ and a smooth function $\mu_r:[0,d_2] \to [\mu_2,1]$ such that the following is true for each fixed $d \in (0,d_2]$:
\begin{compactitem}
\item Fold bifurcations: There is a pair of symmetric solutions $u_r(\mu,d)$ and $v_r(\mu,d)$ of \revised{the steady-state system} (\ref{LDS}) that bifurcate at a saddle-node bifurcation at $\mu = \mu_r(d)$ and exist for all $\mu \in [\mu_2,\mu_r(d)]$. These solutions are smooth in $(\mu,d)$, and for each fixed $\mu$ we have $u_r(\mu,d) \to \bar{u}^{(N,1)}(\mu)$ and $v_r(\mu,d) \to \bar{v}^{(N,1)}(\mu)$ as $d \searrow0$.
\item Expansion: The function $\mu_r(d)$ is given by $\mu_r(d)=1-2d+\mathcal{O}(d^\frac{3}{2})$. 
\item Stability: The linearization of \revised{the lattice dynamical system} (\ref{LDST}) about these solutions posed on $\ell^\infty(\mathbb{Z}^2)$ has precisely eight eigenvalues that cross the origin as the branch is traversed, and these eigenvalues cross transversely.
\end{compactitem}
\end{lem}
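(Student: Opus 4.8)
The plan is to follow the architecture of the proofs of Lemmas~\ref{lem:Saddle} and~\ref{lem:Pitchfork2}. I carry out the argument for off-site solutions; the on-site case is handled by the same method, the only structural change being that the critical cell $(N,1)$ then lies on the symmetry axis $m=1$, so that its discrete Laplacian carries one fewer self-interaction, which alters the leading coefficient in its reduced equation and in fact removes the degeneracy described below.

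\textbf{Setup.} Exactly as in Lemma~\ref{lem:Saddle}, I reduce to the wedge $I$, set $I_-=\{(N,1)\}$, $I_+=\{(n,m)\in I\setminus I_-:\bar{u}^{(N,1)}_{n,m}=1\}$ and $I_0=\{(n,m)\in I:\bar{u}^{(N,1)}_{n,m}=0\}$, use $f_u(0,1)\neq0$ and the implicit function theorem to solve (\ref{LDS}) on $I_0$ with $u^0(u^c,\mu,d)=\mathcal{O}(|\mu-1|+|d|\,\|u^c\|_\infty)$, and then apply the scaling $\mu=1-\nu^2$, $d=\nu^2\tilde d$, $u_{n,m}=1+\nu\tilde u_{n,m}$ on $I\setminus I_0$. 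Expanding in $\nu$ produces the reduced system, which for $M=1$ is the system (\ref{Saddle3}) with its equation~(3) absent. The obstruction, already flagged after (\ref{Saddle3}), is that the leading-order equations at the critical index $(N,1)$ and at the inner-corner index $(N-1,N-1)$ coincide, both reading $0=-2\tilde d+1-\tilde u^2$, so that the single-cell fold of each sits at $\tilde d=\tfrac12$ and at $\nu=0$ the reduced system has a degenerate double fold there.

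\textbf{Removing the degeneracy.} I retain the $\mathcal{O}(\nu)$ corrections in these two equations; they originate from the discrete Laplacian acting on the adjacent $I_+$-cells and differ between the two indices. At $(N,1)$ the relevant neighbour is $(N-1,1)$, contributing, after rescaling, the constant $\tfrac12$; at $(N-1,N-1)$ the diagonal symmetry doubles the contribution of the neighbour $(N-1,N-2)$, producing a constant $c_N>\tfrac12$ (with $c_N=1$ for $N=3$ and $c_N=1/\sqrt2$ for $N\geq4$). Introducing the finer scaling $\tilde d=\tfrac12+\nu d_0$, $\tilde u_{N,1}=\sqrt\nu\,v_1$, $\tilde u_{N-1,N-1}=\sqrt\nu\,v_2$, dividing by $\nu$ and setting $\nu=0$, the two equations become $0=\tfrac12-2d_0-v_1^2$ and $0=c_N-2d_0-v_2^2$. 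Hence $(N,1)$ undergoes a generic fold at $d_0=\tfrac14$, where $v_2^2=c_N-\tfrac12>0$, so $(N-1,N-1)$ sits on a smooth branch attached to $u_+$ and does not fold; all remaining cells of $I\setminus I_0$ obey equations whose folds lie at $\tilde d\geq1$ (or which have no fold) and so stay on smooth $u_+$-branches. The degeneracy is thereby broken.

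\textbf{Finishing, spectrum, and the main obstacle.} Writing the finely rescaled problem as $G(\tilde u^c,\tilde d,\nu)=0$, the $\nu=0$ branch constructed above solves it and $G_{(\tilde u^c,\tilde d)}$ has full rank along it except at that single generic fold; the implicit function theorem together with the persistence theory for folds (used as in Lemma~\ref{lem:Pitchfork}) then give $u_r(\mu,d)$, $v_r(\mu,d)$ and the fold curve. Undoing the scalings gives $d=\tfrac12\nu^2+\mathcal{O}(\nu^3)$ at the fold, hence $\nu^2=2d+\mathcal{O}(d^{3/2})$ and $\mu_r(d)=1-2d+\mathcal{O}(d^{3/2})$. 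The spectral statement follows the two-step spectral reduction used in the proof of Lemma~\ref{lem:Pitchfork2}: a first reduction splits off the spectrum lying a bounded distance to the left of the imaginary axis (carried by the cells in $I_0$), and within the remaining small cluster a second reduction isolates the finer cluster carried by $(N,1)$ and $(N-1,N-1)$ whose equations agree to leading order; rescaling these eigenvalues and invoking the two $\nu=0$ equations above shows that only the eigenvalues attached to the critical cell $(N,1)$ reach the imaginary axis, while the $D_4$-action on the orbit of $(N,1)$ fixes their number (as depicted in Figure~\ref{f:6}) and forces the crossing to occur at the origin and transversely. I expect the main obstacle to be exactly this removal of the double-fold degeneracy: seeing that it is already resolved at order $\nu$, and carrying out the Laplacian bookkeeping around the diagonal cell $(N-1,N-1)$ carefully enough to confirm that the separated folds are ordered so that $(N,1)$ folds while $(N-1,N-1)$ remains regular there, together with tracking which cells need the $\sqrt\nu$-rescaling in the inner region so that the inner and outer branches close up.
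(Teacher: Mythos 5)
Your proposal is correct and follows essentially the same route as the paper's proof: the same wedge reduction and index sets, the implicit-function-theorem step on $I_0$, the scaling $\mu=1-\nu^2$, $d=\nu^2\tilde d$, $u=1+\nu\tilde u$, the observation that the leading-order equations at $(N,1)$ and $(N-1,N-1)$ coincide, the retention of the $\mathcal{O}(\nu)$ Laplacian corrections followed by the finer scaling $\tilde d=\tfrac12+\nu d_0$, $\tilde u=\nu^{1/2}v$, the conclusion that $(N,1)$ folds at $d_0=\tfrac14$ while the corner cell stays regular because its constant exceeds $\tfrac12$, the fold-persistence argument, the expansion $\mu_r(d)=1-2d+\mathcal{O}(d^{3/2})$, and the two-step spectral reduction for the crossing eigenvalues; your constants $c_3=1$ and $c_N=1/\sqrt2$ are in fact the careful evaluation of $2\tilde d\sqrt{1-\tilde d}$ at $\tilde d=\tfrac12$ (the paper writes $\sqrt2$, which is immaterial since only $c_N>\tfrac12$ is used). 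The one quibble is your one-line treatment of the on-site case: the leading coefficient at $(N,1)$ becomes $-3\tilde d$ not because the on-axis cell carries one fewer self-interaction (it keeps all four; it is the \emph{off-site} cell that effectively has three, via $u_{N,0}=u_{N,1}$), but because its reflected neighbour $(N,0)$ is identified with the zero cell $(N,2)$ — taken literally your mechanism would again give $-2\tilde d$ and fail to remove the degeneracy, although the conclusion you assert (no degeneracy on-site, so the simpler argument of Lemma~\ref{lem:Saddle} applies) is exactly the paper's.
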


\begin{proof}
For off-site solutions, we proceed as in Lemma~\ref{lem:Saddle} until we arrive at (\ref{Saddle3}). Since $M=1$ implies that (\ref{Saddle3})(3) is not present, we can solve (\ref{Saddle3})(4-6) using the implicit function theorem to arrive at the solution
\begin{equation}\label{SaddleExpansions}
\begin{array}{lcl}
n=N-1, 1<m<N-1: && \tilde{u}_{n,m} = \sqrt{1-\tilde{d}} + \mathcal{O}(\nu) \\
n=N-1, m=1 && \tilde{u}_{N-1,1}= 1 + \mathcal{O}(\nu) \\
n\leq N-2 && \tilde{u}_{n,m} = 1 + \mathcal{O}(\nu),
\end{array}
\end{equation}
which depends smoothly on $(\tilde{u}_{N,1},\tilde{u}_{N-1,N-1})$, $\tilde{d}\in[0,\frac34]$, and $|\nu|\ll1$. The remaining equations (\ref{Saddle3})(1-2) for the indices $(n,m)=(N,1)$ and $(N-1,N-1)$ coincide to leading order, and we therefore expand them to the next highest order in $\nu$. We find that  (\ref{Saddle3})(1) becomes
\begin{eqnarray*}
0 & = & -2\tilde{d} +1 - \tilde{u}_{N,1}^2 + \nu\left(\tilde{d}(\tilde{u}_{N-1,1} + \tilde{u}_{N,0}) - 4\tilde{d}\tilde{u}_{N,1} +b_1\tilde{u}_{N,1} + b_2\tilde{u}_{N,1}^3\right) + \mathcal{O}(\nu^2)
\\ & \stackrel{(\ref{SaddleExpansions})}{=} &
-2\tilde{d} +1 - \tilde{u}_{N,1}^2 + \nu\left(\tilde{d}-3\tilde{d}\tilde{u}_{N,1} +b_1\tilde{u}_{N,1} + b_2\tilde{u}_{N,1}^3\right) + \mathcal{O}(\nu^2),
\end{eqnarray*}
where we used that the $D_4$-symmetry implies $u_{N,0}=u_{N,1}$, while equation (\ref{Saddle3})(2) becomes
\begin{eqnarray*}
0 & = & -2\tilde{d} + 1 -\tilde{u}_{N-1,N-1}^2 + \nu\left(
\tilde{d}(\tilde{u}_{N-2,N-1} + \tilde{u}_{N-1,N-2}) - 4\tilde{d}\tilde{u}_{N-1,N-1}
\right. \\ && \left.
+ b_1\tilde{u}_{N-1,N-1} + b_2\tilde{u}_{N-1,N-1}^3\right) + \mathcal{O}(\nu^2)
\\ & \stackrel{(\ref{SaddleExpansions})}{=} &
-2\tilde{d} + 1 -\tilde{u}_{N-1,N-1}^2 + \nu\left(2\tilde{d}\sqrt{1-\tilde{d}} - 4\tilde{d}\tilde{u}_{N-1,N-1}
\right. \\ && \left.
+ b_1\tilde{u}_{N-1,N-1} + b_2\tilde{u}_{N-1,N-1}^3\right) + \mathcal{O}(\nu^2),
\end{eqnarray*}	
where $u_{N-2,N-1}=u_{N-1,N-2}$ by $D_4$-symmetry. Next, we change variables according to
\[
\tilde{d} = \frac{1}{2} + \nu\tilde{d}_0, \quad
\tilde{u}_{N,1} = \nu^\frac{1}{2}v_1, \quad
\tilde{u}_{N-1,N-1} = \nu^\frac{1}{2}v_2
\]
so that (\ref{Saddle3})(1-2) become 
\[
\begin{array}{lclcl}
(1) && (N,1): && 0 = \frac{1}{2} - 2\tilde{d}_0 - v_1^2 + \mathcal{O}(\nu^\frac{1}{2}) \\
(2) && (N-1,N-1):	&& 0 = \sqrt{2} - 2\tilde{d}_0 - v_2^2 + \mathcal{O}(\nu^\frac{1}{2})
\end{array}	
\] 
after dividing by the leading factor in $\nu$. We can now continue as in the proof of Lemma~\ref{lem:Pitchfork2} to complete the proof of the lemma.

The situation for on-site patterns is actually simpler as the rescaled equation at index $(N,1)$ is given by
\[
0 = -3\tilde{d} +1 - \tilde{u}_{N,M}^2 + \mathcal{O}(\nu)	
\]
while the remaining equations are as in (\ref{Saddle3}). In this case, no further rescaling is necessary, and we can continue as in  Lemma~\ref{lem:Saddle} focusing only on (\ref{Saddle3}).
\end{proof}

\begin{rmk}\label{r:2}
For $1<M<N-1$, we expect that (\ref{Saddle3})(1) and (\ref{Saddle3})(2) differ only at order $\mathcal{O}(\nu^\frac{M+1}{2})$, thus requiring increasingly higher-order expansions in $\nu$ as $M$ increases. This makes it significantly more complicated to generalize Lemma~\ref{lem:Saddle2}.
\end{rmk}


\subsection{Extension to transcritical bifurcations}\label{sec:app}

We consider extensions from pitchfork and fold bifurcations to transcritical bifurcations at $\mu=0,1$. We first comment on the case where the nonlinearity $f(u,\mu)$ in the lattice dynamical system (\ref{LDST}) admits the Taylor expansion
\begin{equation}\label{TaylorSeries2}
f(u,\mu) = -\mu u + u^2 + \mathcal{O}(\mu^2 + \mu u^2 + u^3).
\end{equation}
at $(u,\mu)=(0,0)$, reflecting the existence of a transcritical bifurcation at $\mu=0$ instead of the pitchfork bifurcation we assumed in Hypothesis~\ref{h1}. As in \S\ref{s3.3} we further assume for simplicity that $u_+(0)=1$. The following result extends Lemmas~\ref{lem:Pitchfork} and~\ref{lem:Pitchfork2} to this situation.

\begin{lem}\label{lem:Transcritical_0}
Assume that $f:\mathbb{R}^2\to\mathbb{R}$ has the Taylor expansion (\ref{TaylorSeries2}) at $(u,\mu)=(0,0)$.
\begin{compactenum}[(i)]
\item
For each fixed $N\geq2$ and $1\leq M\leq N-1$ the conclusions of Lemma~\ref{lem:Pitchfork} hold except that the expansion for the location of the fold bifurcations is now given by $\mu_l(d)=2\sqrt{2}d^\frac{1}{2}+\mathcal{O}(d)$.
\item
For $N=1,2$ the conclusions of Lemma~\ref{lem:Pitchfork2} hold except that the expansion for the location of the fold bifurcations is now given by $\mu_l(d)=2d^\frac{1}{2}+\mathcal{O}(d)$.
\end{compactenum}
\end{lem}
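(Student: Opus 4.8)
The plan is to run the proofs of Lemmas~\ref{lem:Pitchfork} and~\ref{lem:Pitchfork2} essentially line by line, modifying only the rescaling that resolves the bifurcation near $\mu=0$. Two features distinguish the transcritical case: the small root now scales linearly, $u_-(\mu)=\mu+\mathcal{O}(\mu^2)$, rather than as $\sqrt{\mu}$, and the nonlinearity in (\ref{TaylorSeries2}) is quadratic in $u$ rather than cubic. After a preliminary translation of $u$ making $u=0$ a root of $f(\cdot,\mu)$ for every $\mu$ --- so that the remainder in (\ref{TaylorSeries2}) carries at least one factor of $u$ --- and the normalization $u_+(0)=1$, I would first solve (\ref{LDS}) on the index set $I_+$ by the implicit function theorem exactly as in Lemma~\ref{lem:Pitchfork}, since the linearization there equals $f_u(u_+(0),0)\neq0$ and is unaffected by the change of bifurcation type, obtaining $u^+(u^c,\mu,d)=1+\mathcal{O}(|\mu|+|d|\,\|u^c\|_\infty)$. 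On $I\setminus I_+$ I would then use the scaling
\[
\mu=\nu^2,\qquad d=\nu^4\tilde d,\qquad u_{n,m}=\nu^{2\rho_{n,m}}\tilde u_{n,m},\qquad
\rho_{n,m}:=\inf_{(\tilde n,\tilde m)\in I_+}\bigl(|n-\tilde n|+|m-\tilde m|\bigr),
\]
which is forced by the balances $\mu u\sim u^2\sim d$ at the critical cell and $d/\mu\sim\nu^2$ for the geometric decay away from $I_+$, in place of the scaling $\mu=\nu^2,\;d=\nu^3\tilde d,\;u_{n,m}=\nu^{\rho_{n,m}}\tilde u_{n,m}$ used in \S\ref{s3.3}.

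Substituting into (\ref{LDS}) and dividing by the leading power of $\nu$, the equation at the critical cell becomes, to leading order, $0=c\tilde d-\tilde u+\tilde u^2$, where $c$ is the symmetry-weighted number of neighbours of the critical cell lying in $I_+$: one checks $c=2$ for the cells $(N,M+1)$ with $1\le M\le N-1$ treated in part~(i) and $c=1$ for the critical cell $(2,1)$ arising for $N=1$ in part~(ii). Its solution curve $c\tilde d=\tilde u(1-\tilde u)$ connects $(\tilde u,\tilde d)=(0,0)$ to $(1,0)$, the rescaled image of $u_-(\mu)$, and has a generic fold at $\tilde u=\tfrac12$, $\tilde d_{\mathrm{sn}}=\tfrac1{4c}$; unwinding $\mu=\nu^2$, $d=\nu^4\tilde d$ then yields $\mu_l(d)=2\sqrt{c}\,d^{1/2}+\mathcal{O}(d)$, that is $2\sqrt{2}\,d^{1/2}$ in part~(i) and $2d^{1/2}$ in part~(ii). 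The remaining steps transfer verbatim: the non-critical cells at $\ell^1$-distance one satisfy equations of the same type with a smaller coefficient $c'<c$ and hence do not reach their own folds before $\tilde d_{\mathrm{sn}}$, while the outer cells solve a system $(-I+\tilde d B)\hat u=\tilde d h$ with $\|B\|\le2$ and $|h|_\infty\le2$ as in \S\ref{s3.3}, which stays invertible because $\tilde d_{\mathrm{sn}}\le\tfrac14$. Persistence of the branch and of the fold for $0<\nu\ll1$ follows from the implicit function theorem and the standard persistence of generic folds, and the spectral statement follows as in Lemma~\ref{lem:Pitchfork}: the reduced operator on $E^c$ equals $\nu^2\bigl(-1+2\tilde u^*_{n,m}+\mathcal{O}(\nu)\bigr)$, with exactly $|I_-|$ entries --- eight when $M\neq N-1$ and four when $M=N-1$ in part~(i), eight throughout part~(ii) --- crossing zero transversely as $\tilde u^*$ passes through $\tfrac12$, the rest staying uniformly to the left of the imaginary axis.

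The only genuinely new bookkeeping is the case $N=2$ in part~(ii), which reproduces the delicate step of the proof of Lemma~\ref{lem:Pitchfork2}: the equations for the critical cell $(3,1)$ and for the neighbouring cell $(3,2)$ now agree to leading order, both being of the form $0=\tilde d-\tilde u+\tilde u^2$, so one expands to the next relevant order, where the discrete Laplacian produces different self-interaction counts at the two cells depending on whether one works with off-site ($u_{3,0}=u_{3,1}$) or on-site ($u_{3,0}=u_{3,2}$) symmetry, exactly as in (\ref{PitchSol4}). A secondary rescaling about the common fold point $(\tilde u,\tilde d)=(\tfrac12,\tfrac14)$ analogous to the one in Lemma~\ref{lem:Pitchfork2} decouples these into $0=d_0+v_j^2-\alpha_j+\ldots$ with $\alpha_1\neq\alpha_2$, and one must verify that the critical cell folds at the smaller of the two values $\alpha_j$ so that the neighbouring cell remains on its regular branch; the eigenvalue count then follows from the two successive spectral reductions of Lemma~\ref{lem:Pitchfork2}. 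I do not expect a conceptual obstacle: the points that require care are this next-order expansion for $N=2$ together with the correct ordering of the two folds, and --- a small but essential detail --- the preliminary translation of $u$ ensuring that no pure-$\mu$ term survives in the leading-order critical-cell equation, which would otherwise shift the coefficient in $\mu_l(d)$.
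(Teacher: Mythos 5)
Your proposal is correct and follows essentially the same route as the paper: the scaling $\mu=\nu^2$, $d=\nu^4\tilde d$, $u_{n,m}=\nu^{2\rho_{n,m}}\tilde u_{n,m}$ is exactly the paper's choice $u_{n,m}=\mu^{\rho_{n,m}}\tilde u_{n,m}$, $d=\mu^2\tilde d$ after the trivial reparametrization $\nu=\sqrt{\mu}$, and the resulting reduced system (coefficient $2\tilde d$ at the critical cell for part~(i), $\tilde d$ for part~(ii)), the fold at $\tilde u=\frac12$, and the remaining persistence, invertibility, and spectral arguments repeat the proofs of Lemmas~\ref{lem:Pitchfork} and~\ref{lem:Pitchfork2} just as the paper does. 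Your additional care about the $N=2$ next-order expansion and the vanishing of $f(0,\mu)$ matches steps the paper leaves implicit (it omits the proof of part~(ii) as "similar to Lemma~\ref{lem:Pitchfork2}"), so there is no substantive divergence.
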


\begin{proof}
To prove (i), we proceed as in the proof of Lemma~\ref{lem:Pitchfork} except that we use the variables
\[
u_{n,m} = \mu^{\rho_{n,m}}\tilde{u}_{n,m}, \quad
d = \mu^2\tilde{d}
\]
and arrive at the reduced scaled system
\[
\begin{array}{lcl}
n = N, m = M+1: & \quad & 0 = 2\tilde{d} - \tilde{u}_{N,M+1} + \tilde{u}^2_{N,M+1} + \mathcal{O}(\mu) \\
n = N, M+1 < m < N: && 0 = \tilde{d} - \tilde{u}_{n,m} + \tilde{u}^2_{n,m} + \mathcal{O}(\mu) \\
n = N+1, 1 \leq m < M: && 0 = \tilde{d} - \tilde{u}_{n,m} + \tilde{u}^2_{n,m} + \mathcal{O}(\mu) \\
n = N+1,m = M: && 0 = \tilde{d} - \tilde{u}_{n,m} + \tilde{u}^2_{n,m} + \mathcal{O}(\mu) \\
n = N, m = N: && 0 = -\tilde{u}_{n,m} + \tilde{u}^2_{n,m} + \mathcal{O}(\mu^3) \\
n = N+1,m \geq M+1: && 0 = \tilde{d}\tilde{u}_{n-1,m} - \tilde{u}_{n,m} + \mathcal{O}(\mu) \\
n > N+1,1 \leq m \leq N: && 0 = \tilde{d}\tilde{u}_{n-1,m} - \tilde{u}_{n,m} + \mathcal{O}(\mu) \\
n > N+1,m \geq N+1: && 0 = \tilde{d}\tilde{u}_{n-1,m} + d\tilde{u}_{n,m-1} - \tilde{u}_{n,m} + \mathcal{O}(\mu)
\end{array}
\]
for indices $(n,m)\in I\setminus I_+$. The remainder of the proof is now identical to that of Lemma~\ref{lem:Pitchfork}. We omit the proof of (ii) as it is similar to the proof of Lemma~\ref{lem:Pitchfork2}.
\end{proof}

Next, we comment on the case where the nonlinearity $f(u,\mu)$ admits the Taylor expansion
\begin{equation}\label{TaylorSeries3}
f(1+u,1+\mu) = \mu u - u^2 + \mathcal{O}(\mu^2 + \mu u^2 + u^3)
\end{equation}
at $(u,\mu)=(1,1)$, reflecting the existence of a transcritical bifurcation at $\mu=1$ instead of the saddle-node bifurcation we assumed in Hypothesis~\ref{h1}. The following result extends Lemmas~\ref{lem:Saddle} and~\ref{lem:Saddle2} to this situation. We omit the proof as it is analogous to the proofs given above.

\begin{lem}
Assume that $f:\mathbb{R}\times\mathbb{R}\to\mathbb{R}$ has the Taylor expansion (\ref{TaylorSeries3}) at $(u,\mu)=(1,1)$.
\begin{compactenum}[(i)]
\item
Fix $N\geq2$ and $M\in\{N-1,N\}$, then the conclusions of Lemma~\ref{lem:Saddle} hold except that the fold curve has the expansion $\mu_r(d)=1-2\sqrt{2}d^\frac{1}{2}+ \mathcal{O}(d)$.
\item
For each $N\geq3$, the conclusions of Lemma~\ref{lem:Saddle2} hold except that the fold curve has the expansion $\mu_r(d)=1-\sqrt{2}d^\frac{1}{2}+ \mathcal{O}(d)$.
\end{compactenum}
\end{lem}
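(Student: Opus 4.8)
The plan is to mirror the proofs of Lemmas~\ref{lem:Saddle} and~\ref{lem:Saddle2} essentially verbatim, changing only two ingredients. First, we replace the saddle-node normal form by the transcritical expansion (\ref{TaylorSeries3}), assuming (after a preliminary $\mu$-dependent affine change of $u$) that the quadratic part of $f$ at $(1,1)$ is exactly $\mu u-u^2$. Second, we replace the blow-up $\mu=1-\nu^2$, $d=\nu^2\tilde{d}$, $u_{n,m}=1+\nu\tilde{u}_{n,m}$ used there by the transcritical blow-up $\mu=1-\nu$, $d=\nu^2\tilde{d}$, $u_{n,m}=1+\nu\tilde{u}_{n,m}$, which encodes the new scaling $1-\mu\sim d^\frac12$ rather than $1-\mu\sim d$ (the same rescaling was used at the transcritical point $\mu=0$ in Lemma~\ref{lem:Transcritical_0}, where $d\sim\mu^2$ plays the analogous role). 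As in Lemma~\ref{lem:Saddle}, the transcritical collision forces $u_+(1)=u_-(1)=1$, so $\mathcal{F}(\bar{u}^{(N,M)}(1),1,0)=0$; setting $I_-=\{(N,M)\}$, $I_+=\{(n,m)\in I\setminus I_-:\bar{u}^{(N,M)}_{n,m}=1\}$ and $I_0=\{(n,m):\bar{u}^{(N,M)}_{n,m}=0\}$ and using $f_u(0,1)\neq0$, the implicit function theorem eliminates the cells in $I_0$, yielding $u^0(u^c,\mu,d)=\mathcal{O}(|\mu-1|+|d|\,\|u^c\|_\infty)$ and, since $u\equiv0$ is an exact equilibrium of $f(\cdot,\mu)$, in fact $u^0=\mathcal{O}(d)$. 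Substituting the blow-up and dividing by $\nu^2$, the steady-state system on $I\setminus I_0$ reduces to a finite system -- the analogue of (\ref{Saddle2}) -- in which the critical cell obeys $0=-c_{N,M}\tilde{d}-\tilde{u}_{N,M}-\tilde{u}_{N,M}^2+\mathcal{O}(\nu)$, where $c_{N,M}$ is the number of non-unit neighbours of $(N,M)$, while each cell in $I_+$ obeys $0=-c_{n,m}\tilde{d}-\tilde{u}_{n,m}-\tilde{u}_{n,m}^2+\mathcal{O}(\nu)$ with $c_{n,m}\in\{0,1\}$.

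For part~(i) one has $c_{N,M}=2$ and all other $c_{n,m}\in\{0,1\}$, so the critical-cell equation $\tilde{u}_{N,M}^2+\tilde{u}_{N,M}+2\tilde{d}=0$ is strictly distinct from the rest; at $\nu=0$ it carries the branch $(\tilde{u}_{N,M},\tilde{d})(s)=(s,-\frac12 s(1+s))$, $-1\leq s\leq0$, joining $(-1,0)$ to $(0,0)$ through a single generic fold at $s=-\frac12$, $\tilde{d}=\frac18$, and the remaining equations have, for $\tilde{d}$ in the relevant range, unique solutions lying on the branch near $\tilde{u}_{n,m}=0$ that never reach their own folds (which occur at $\tilde{d}=\frac14$), exactly as in the proof of Lemma~\ref{lem:Pitchfork}. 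Persistence, uniqueness, the fold, and the transverse crossing of four ($M=N$) or eight ($M=N-1$) eigenvalues then follow word for word from the implicit function theorem, fold-persistence, and the spectral-projection argument of Lemma~\ref{lem:Saddle}; only the value of $\mu$ at the fold changes, and reading it off in the new scaling gives $\mu_r(d)=1-2\sqrt{2}\,d^\frac12+\mathcal{O}(d)$.

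For part~(ii) with $M=1$, $N\geq3$ the equations for the critical cell $(N,1)$ and for $(N-1,N-1)$ coincide at leading order (both cells have two non-unit neighbours in the off-site case), so, as in Lemma~\ref{lem:Saddle2}, one first solves the remaining equations -- the analogue of (\ref{SaddleExpansions}) -- by the implicit function theorem, then expands the two coinciding equations to the next order in $\nu$, where they differ because the leading-order values of the neighbours of $(N,1)$ and of $(N-1,N-1)$ differ, and performs a secondary blow-up $\tilde{d}=\tilde{d}_*+\nu\tilde{d}_0$, $\tilde{u}_{N,1}=\tilde{u}_*+\nu^\frac12 v_1$, $\tilde{u}_{N-1,N-1}=\tilde{u}_*+\nu^\frac12 v_2$ about the common leading-order fold point $(\tilde{u}_*,\tilde{d}_*)$. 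This produces two decoupled equations $v_1^2+2\tilde{d}_0=\gamma_1+\mathcal{O}(\nu^\frac12)$ and $v_2^2+2\tilde{d}_0=\gamma_2+\mathcal{O}(\nu^\frac12)$; one checks $\gamma_1<\gamma_2$ so that the branch folds along the $v_1$-equation while $v_2$ stays on its left branch, and then proceeds exactly as in Lemmas~\ref{lem:Pitchfork2} and~\ref{lem:Saddle2} (two successive spectral reductions followed by a rescaling of eigenvalues) to obtain the fold, the transverse crossing of eight eigenvalues, and the expansion $\mu_r(d)=1-\sqrt{2}\,d^\frac12+\mathcal{O}(d)$. The on-site case is handled as in Lemma~\ref{lem:Saddle2}: the symmetry-induced self-interaction changes $c_{N,1}$ and removes the leading-order coincidence, so no secondary blow-up is needed.

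The only genuinely new work is bookkeeping. One must check in the reduced system that the cubic remainder in (\ref{TaylorSeries3}) and the $\mathcal{O}(\nu)$ corrections coming from the discrete Laplacian are indeed of higher order -- here the estimate $u^0=\mathcal{O}(d)$ is what keeps the zero-cell contributions out of the leading- and first-order reduced equations. The step I expect to require the most care is the strict inequality $\gamma_1<\gamma_2$ in part~(ii): it is precisely what forces the branch to fold along the critical-cell equation rather than along $(N-1,N-1)$, it is the transcritical counterpart of the inequality ``$\frac12<\sqrt{2}$'' appearing in the proof of Lemma~\ref{lem:Saddle2}, and it is the single place at which the transferred argument could, in principle, break down. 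Everything else carries over unchanged from Lemmas~\ref{lem:Saddle} and~\ref{lem:Saddle2}.
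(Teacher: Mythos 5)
The paper itself omits the proof of this lemma (it only says the argument is analogous), so there is no detailed proof to compare routes against; your strategy --- rerun Lemmas~\ref{lem:Saddle} and~\ref{lem:Saddle2} with the transcritical blow-up $\mu=1-\nu$, $d=\nu^2\tilde d$, $u_{n,m}=1+\nu\tilde u_{n,m}$, leading to the reduced equations $\tilde u_{n,m}^2+\tilde u_{n,m}+c_{n,m}\tilde d=\mathcal{O}(\nu)$ with $c_{n,m}$ the number of zero neighbours --- is exactly the intended analogue of Lemma~\ref{lem:Transcritical_0}, and your treatment of part~(i) is correct: $c_{N,M}=2$, the fold of $\tilde u^2+\tilde u+2\tilde d=0$ sits at $\tilde d=\frac18$, and this gives $\mu_r(d)=1-2\sqrt2\,d^{\frac12}+\mathcal{O}(d)$.

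The genuine gap is in part~(ii): the expansion you assert at the end does not follow from --- indeed it contradicts --- your own reduced system. For the off-site $(N,1)$ pattern the critical cell $(N,1)$ and its degenerate partner $(N-1,N-1)$ both have $c=2$, so the leading-order fold is again at $\tilde d=\frac18$; the secondary blow-up $\tilde d=\frac18+\nu\tilde d_0$ shifts the fold only at order $\nu$ in $\tilde d$, hence only at order $d$ in $\mu$, so this argument can only ever produce $\mu_r(d)=1-2\sqrt2\,d^{\frac12}+\mathcal{O}(d)$, the same leading coefficient as in part~(i), and your on-site remark (where $c_{N,1}=3$) would give $1-2\sqrt3\,d^{\frac12}+\mathcal{O}(d)$; neither reproduces the stated $1-\sqrt2\,d^{\frac12}$, which would require an effective coefficient $c=\frac12$ that no neighbour count can supply. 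You must either exhibit where a different constant enters (I see no such place; your reduction looks right) or state explicitly that the coefficient in the lemma cannot be obtained by this argument --- simply copying the claimed expansion is the one quantitative point where (ii) differs from (i), and as written the proposal is internally inconsistent there. Separately, the inequality $\gamma_1<\gamma_2$ that you correctly identify as the crux of part~(ii) is left unchecked, but it is the only substantive new computation and it should be carried out: expanding to order $\nu$ at the common fold point $(\tilde u_*,\tilde d_*)=(-\frac12,\frac18)$, the cubic coefficients of (\ref{TaylorSeries3}) enter both equations identically and cancel, and one finds $\gamma_1=\frac{3}{16}$ for $(N,1)$ (using $u_{N,0}=u_{N,1}$ and that $(N-1,1)$ is a $c=0$ cell) versus $\gamma_2=\frac14\bigl(1+\beta\bigr)$ for $(N-1,N-1)$ with $\beta=\frac12\bigl(\sqrt{1/2}-1\bigr)$ coming from its $c=1$ neighbours (and $\beta=0$ when $N=3$), so $\gamma_1<\gamma_2$ and the critical cell does fold first; this verification needs to appear in the proof.
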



\section{Discussion}\label{sec:Discussion}

Motivated in part by the intention to understand the complex bifurcation diagram of hexagon patches in the planar Swift--Hohenberg equation, we studied $D_4$-symmetric patches of ODEs posed on a square lattice. For large coupling strengths, numerical computations (see Figures~\ref{f:1}(ii) and~\ref{f:2}(iii)) show that the bifurcation diagrams as well as the spatial profile changes that occur along these branches are very similar.

Since the case of larger coupling strengths is difficult to tackle analytically, we focused on the anti-continuum limit where the coupling strength is small. In this case, the bifurcation is more regular and indeed resembles the case of spatially one-dimensional patterns (see Figures~\ref{f:1}(i) and~\ref{f:2}(i)). We used Lyapunov--Schmidt reduction to rigorously establish parts of the bifurcation diagram of localized patterns in this regime. We were not able to continue localized patterns through some of the fold bifurcations as this would have necessitated higher-order expansions that we did not carry out in this work.

Taylor and Dawes \cite{Taylor} had explained the transition from the regular diagrams for small coupling strengths to the more complex diagrams that occur for larger coupling strengths by the occurrence of switchbacks, where isolas collide with the primary branch in cusp bifurcations that lead to a rearrangement of the solution branch. We provided a more systematic computation of the cusp bifurcations at the patches $\bar{u}^{(N,1)}$: our main finding is that the cusp bifurcation points we found appear to converge to a specific parameter combination of system parameter and coupling strength as the diameter of the localized patches increases. This indicates that the transition from regular to complex diagrams occurs relatively suddenly as a critical threshold of the coupling strength is crossed. We also showed in Figure~\ref{f:8}(ii) that the switchbacks for patches with larger spatial extent can become quite complex.

\revised{For spatially extended systems posed on cylindrical domains, the location of all branches of asymmetric patterns, including their stability properties, is determined solely by the location of the branches corresponding to symmetric patterns \cite{Avitabile, Beck, Makrides2}. In contrast, it seems impossible to predict the location of asymmetric branches in the planar lattice case based on the location of on-site or off-site $D_4$-symmetric patterns.}

There are several open problems that emerge from this and earlier work. As mentioned above, we were not able to continue the square patches through all folds. We are also not certain whether we captured all possible cusp bifurcations at which the primary branch undergoes transitions: we attempted to find cusps at other patterns besides the $\bar{u}^{(N,1)}$ patches, and while our algorithm failed at these folds, this is not sufficient evidence for the non-existence of cusps elsewhere on the branch. One possible avenue is to continue each fold in $(\mu,d)$-space whilst testing for cusps during continuation.

In this work, we coupled neighboring cells in the lattice using the 5-point Laplacian, and our analysis utilized the resulting coupling structure extensively. It would be interesting to explore other coupling operators with finite and possibly infinite support. Since our analysis in the anti-continuum limit relied on Lyapunov--Schmidt reduction, these cases should be amenable to analysis as well, and it would be interesting to see whether the bifurcation diagrams are similar. Finally, other lattices could be explored: we refer to \revised{\cite{Bramburger, Susanto} for numerical computations of localized patterns on hexagonal lattices, to \cite{Susanto} for triangular lattices,} and to \cite{McCullen} for a numerical study of snaking of localized patterns in a predator-prey model on Barab\'{a}si--Albert networks, where the coupling operator is given by the graph laplacian.

Finally, our work was motivated partially by the behavior of hexagon patches in the Swift--Hohenberg equation, which were studied originally in \cite{Lloyd}. Analysing this case remains out of reach. It would be interesting to compare the intuition from the existence regions of hexagon fronts with different orientations studied in \cite{Lloyd, Kozyreff2} with the case of lattices, where they might be easier to analyse. It would also be interesting to explore the connections with localized quasi-crystals studied more recently in \cite{Subramanian}.


\paragraph{Acknowledgements.}
Bramburger was supported by an NSERC PDF. Sandstede was partially supported by the NSF through grant DMS-1714429.



\end{document}